\documentclass{article}
\usepackage{graphicx} 
\usepackage{comment}
\usepackage[english]{babel}
\usepackage{physics}
\usepackage{amsmath,amsthm}
\usepackage[all,cmtip]{xy}
\usepackage{amssymb}
\usepackage{geometry}
\usepackage{fancyhdr}
\usepackage{lipsum}
\usepackage{mathtools}
\usepackage{relsize}
\usepackage{physics}
\usepackage{braket}
\usepackage{bm}
\usepackage{esint}
\usepackage{enumitem}
\usepackage{mathrsfs}
\usepackage{xcolor}
\usepackage{bbold}
\usepackage{comment}

\newtheorem{theorem}{Theorem}

\newtheorem{corollary}[theorem]{Corollary}

\newtheorem{definition}[theorem]{Definition}

\newtheorem{lemma}[theorem]{Lemma}

\newtheorem{proposition}[theorem]{Proposition}
\newtheorem{remark}[theorem]{Remark}

\allowdisplaybreaks

\def\bi{\begin{itemize}}
\def\ei{\end{itemize}}
\def\bem{\begin{emuerate}}
\def\eem{\end{enumerate}}
\def\beq{\begin{eqnarray*}}
\def\eeq{\end{eqnarray*}}

\def\Z{\mathbb{Z}} 
\def\R{\mathbb{R}} 
\def\E{\mathbb{E}} 

\def\Ei{\text{Ei}}

\def\1{\mathbb{1}}

\def\div{\text{div}}
\def\grad{\text{grad}}

\title{Moment estimates for the stochastic heat equation on Cartan-Hadamard manifolds}


\author{Fabrice Baudoin\and Hongyi Chen\and Cheng Ouyang}

\begin{document}

\maketitle

\begin{abstract}
    We study the effect of curvature on the Parabolic Anderson model by posing it over a Cartan-Hadamard manifold. We first construct a family of noises white in time and colored in space parameterized by a regularity parameter $\alpha$, which we use to explore regularity requirements for well-posedness. Then, we show that conditions on the heat kernel imply an exponential in time upper bound for the moments of the solution, and a lower bound for sectional curvature imply a corresponding lower bound. These results hold if the noise is strong enough, where the needed strength of the noise is affected by sectional curvature.
\end{abstract}

\tableofcontents

\section{Introduction}

The main objective of this paper is to study the effects of curvature on the parabolic Anderson model, a stochastic PDE formally expressed as
\begin{equation}\label{eq:pam}
\partial_{t} u(t,x) = \Delta u(t,x) + \beta u(t,x) \cdot W(t,x),\quad u(0,x)=u_0(x).
\end{equation}
The solution of \eqref{eq:pam} is expected to be a random field, a stochastic process $\set{u(t,x)}_{t\in\R_+,x\in M}$ for some space $M$. For us, $M$ will be an $n$-dimensional Cartan-Hadamard manifold, a simply connected complete Riemannian manifold with non-positive sectional curvature. We note that non-random field solutions have been constructed for various SPDEs on manifolds \cite{BDFTphi43,Hairer2023RegularitySO,Mayorcas2023SingularSO}, but study of finer properties remains difficult. $\beta>0$ is a parameter known as inverse temperature, which has been known to exhibit interesting behavior  \cite{CSZ17,Dunlap2020AFS,Dunlap2018FluctuationsOT,GHL24,Tao2022GaussianFO}. $\Delta$ is the Laplace-Beltrami operator, defined by $\Delta f=\div(\grad f)$ for smooth functions $f:M\to \R$. Finally, $W$ is a centered space-time Gaussian noise formally satisfying $$\E[W(t,x)W(t',x')]=\delta(t-t')G(x,x')$$ where $\delta$ is Dirac's delta and $G:M\times M\to \R$ is called the correlation function. \\
In $\R^n,\Z^n$, and graphs, the solution of \eqref{eq:pam} is the partition function for polymer models \cite{ACQ,Comets2004DirectedPI,Cosco2020DirectedPO}. The corresponding free energy $h=\log u$ solves the KPZ equation \cite{ACQ,Bertini1997StochasticBA}, which also models growth of many fluctuating surfaces if $n=1$. Exploration of these topics in our setting is left for future work.\\

It is known that if $G(x,x')=\delta_x(x')$, \eqref{eq:pam} is not solvable in $\R^n$ except when $n=1$. In \cite{Dalang99}, the condition $$\int_{\R^n}\frac{\hat{G}(d\xi)}{1+|\xi|^2}<+\infty,$$ where $\hat{G}$ is the Fourier transform of $G(x,x')=G(x-x')$ was given for the well-posedness of \eqref{eq:pam}. This is Dalang's condition, which has been recognized as a regularity requirement for the noise $W$. Motivated by this, \cite{BAUDOIN2023109920} introduced a family of noises $\set{W_\alpha}_{\alpha\geq 0}$, where the regularity parameter $\alpha\geq 0$ is induced by making the correlation function $G_{2\alpha}$ the kernel of the operator $(-\Delta)^{-\alpha}$ on the Heisenberg group $H^n$ equipped with a sub-Riemannian structure. In Euclidean space $\R^n$, Dalang's condition for this family of noises becomes $\alpha>\frac{n-2}{4}$. Since the topological dimension of $H^n$ is $2n+1$, one may initially expect Dalang's condition to be $\alpha>\frac{2n-1}{4}$, but \cite{BAUDOIN2023109920} showed that the correct condition is in fact $\alpha>\frac{n}{2}$. This is not surprising once one realizes that $\frac{n}{2}=\frac{2(n+1)-2}{4}$, where $2(n+1)$ is the Hausdorff dimension of $H^n$ with the sub-Riemannian structure. Noises of this type were also constructed in \cite{baudoin2024parabolic,Chen2023ParabolicAM}, and Section 2 of this paper will follow a similar construction, where the $\R^n$ result $\alpha>\frac{n-2}{4}$ is shown to still hold.\\

We also investigate when the $p$-$th$ moment of the solution $u(t,x)$ grows exponentially in time. In $\Z^n$ and $\R^n$, this has been related to the intermittent behavior of the solution, which is the presence of high peaks in the graph of any realization of the solution (see \cite{Carmona1994ParabolicAP,Bertini1995TheSH,Hu2014StochasticHE,Molchanov1991IdeasIT}). \cite{TINDEL200253} showed that upper exponential in time bounds for the moments hold for compact manifolds, hinting that intermittency is a local property. \cite{Candil2023ParabolicSP,Chen2023ParabolicAM} gave upper and lower bounds for the moments assuming rough $u_0$ in bounded domains and the torus, respectively. The first positive exponential in time upper and lower moment bounds in a non-compact non-Euclidean manifold appears to be \cite{BAUDOIN2023109920} which as noted prior worked on a sub-Riemannian Heisenberg group. Most recently, \cite{baudoin2024parabolic} was able to show that the exponential growth rate of the moments is affected by geometric quantities on very rough domains, such as fractals. In our case, $M$ is a Cartan-Hadamard manifold with negative curvature and our goal is to determine how the negative curvature environment affects the $p$-$th$ moment growth of the solution. For such manifolds the spectrum has a positive bottom, see \cite{McKean}, one therefore expects that at least for small $\beta$'s the $p$-$th$ moment decays exponentially fast. This heuristic is made very precise in our Theorem \ref {pth moment}, where we prove the following: Let $p\geq 2$, then we have for $u_0\in L^\infty(M)\cap L^r(M)$ 
\begin{align}\label{upper bound intro}
\limsup_{t\to+\infty}\frac{\ln\E[u(t,x)^p]}{t}\leq \frac{p}{2}\left(\Theta_{\alpha}(\sqrt{p-1}\beta)-\frac{(n-1)^2}{\max(2,r)}K_1\right),
\end{align}
where $-K_1$ is an upper bound on the sectional curvature of $M$ and $\Theta_{\alpha}$ is a function which is zero for $\beta$ small enough and such that:
\begin{enumerate}[label=(\roman*)]
        \item if $\alpha\in \left(\frac{n-2}{4},\frac{n}{4}\right)$, $\Theta_{\alpha}(\beta)\sim_{\beta\to\infty}(C\beta^2)^{1-2\alpha+\frac{n}{2}}$,
        \item if $\alpha=\frac{n}{4}$, then $\Theta_{\alpha}(\beta)\sim_{\beta\to+\infty}\frac{C\beta^2}{\ln([C\beta]^2)^2}$,\\
        \item if $\alpha>\frac{n}{4}$, then $\Theta_{\alpha}(\beta)\sim_{\beta\to+\infty}C\beta^2$.
    \end{enumerate}
In particular, for $r=2$ and $\beta$ small enough, we therefore get the upper bound
\[
\limsup_{t\to+\infty}\frac{\ln\E[u(t,x)^2]}{t}\le -\frac{(n-1)^2}{2} K_1
\]
which is consistent with the McKean sharp estimate \cite{McKean} which states that the bottom of the $L^2$-spectrum of $-\Delta$ for $M$ satisfying $\sec_M\leq-K_1<0$ is bounded below by $\frac{(n-1)^2}{4} K_1$. In our Theorem \ref{matching large beta effect}, under the additional assumption of a lower bound on the sectional curvature,  we  remarkably obtain a matching lower bound to \eqref{upper bound intro} in the asymptotic regime $\beta \to +\infty$, therefore establishing the sharpness of \eqref{upper bound intro} in the regime $\beta \to +\infty$.

\

This paper is organized as follows. In section 2 we clarify some notation, state a useful elementary result about the heat kernel on Cartan-Hadamard manifolds, and define a family of noises indexed by a regularity parameter $\alpha\geq 0$, which is well defined given some conditions on the heat kernel, which is implied by a negative sectional curvature upper bound on $M$. In section 3, we prove the well-posedness of \eqref{eq:pam} for $\alpha>\frac{n-2}{4}$, which we call Dalang's condition, and show that \eqref{upper bound intro} holds for these solutions. Finally in section 4 we show that a lower bound on sectional curvature gives rise to the aforementioned matching lower bound for moment asymptotics.

\section{Preliminaries}

We first gather some notation we will use in the paper.
\begin{enumerate}
\item $M$ is a $n$-dimensional Cartan-Hadamard manifold, i.e. $M$ is a Riemannian manifold that is complete and simply connected and has everywhere non-positive sectional curvature.
    \item For $t>0,x,y\in M$, $P_t(x,y)$ is the heat kernel of $M$.
    \item $P^K_t(r)$ for $t>0,r\geq 0$ will denote the heat kernel of a Hadamard manifold with constant sectional curvature $-K$.
    \item $\sec_M\leq (\geq)K$ will be a uniform bound on the sectional curvature of $M$.
    \item $B(x,R)$ will denote a geodesic ball of radius $R$ centered at $x$.
    \item For two functions $f,g:\R\to\R$, $f(x)\sim_{x\to a}g(x)$ means $\lim_{x\to a} \frac{f(x)}{g(x)}=C\neq 0$.
    \item We will write $f\leq C g\leq Ch$ where the first and second $C$ could be different and the second $C$ could depend on more parameters than the first. The same goes for $f\geq C g\geq C h$.
\end{enumerate}

\subsection{Heat kernel Estimates on Cartan-Hadamard manifolds}

We recall the following two results:

\begin{theorem}\label{Cheeger-Yau}
    If $M$ is a Cartan-Hadamard manifold and $\sec_M\leq -K_1\leq 0$, then $$P_t(x,y)\leq P^{K_1}_t(d(x,y)).$$
    If $\sec_M\geq -K_2>-\infty$, then $$P_t(x,y)\geq P^{K_2}_t(d(x,y)).$$
\end{theorem}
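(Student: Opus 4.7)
The plan is to prove both inequalities via the Laplacian comparison theorem combined with the parabolic maximum principle applied to the radial lift of the model heat kernel. A crucial simplification is that on a Cartan-Hadamard manifold the cut locus of any point $x \in M$ is empty, so the distance function $r_x(y) := d(x,y)$ is smooth on $M \setminus \{x\}$; this lets us work pointwise with $\Delta_M r_x$.

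First I would set up the candidate comparison function. Let $f_K(t,r) := P^K_t(r)$ denote the radial profile of the heat kernel on the model Hadamard space of constant curvature $-K$, so that on the model space
\[
\partial_t f_K = \partial_r^2 f_K + (n-1)\sqrt{K}\coth\!\left(\sqrt{K}\,r\right)\partial_r f_K.
\]
Lift to $M$ by setting $u(t,y) := f_K(t, r_x(y))$. A direct computation using the chain rule gives
\[
(\partial_t - \Delta_M) u \;=\; \partial_r f_K \cdot \Bigl( (n-1)\sqrt{K}\coth\!\left(\sqrt{K}\, r_x\right) \;-\; \Delta_M r_x \Bigr),
\]
valid on $(0,\infty) \times (M \setminus \{x\})$. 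Two standard facts now determine the sign of this quantity: (a) the model heat kernel is strictly radially decreasing, so $\partial_r f_K < 0$ (this follows either from the explicit Gangolli/McKean-type formula on hyperbolic space or from the rotational symmetry and stochastic representation); and (b) the Laplacian comparison theorem: if $\sec_M \leq -K$ then $\Delta_M r_x \geq (n-1)\sqrt{K}\coth(\sqrt{K} r_x)$, while if $\sec_M \geq -K$ the reverse inequality holds. I would derive (b) from the Riccati equation for the shape operator of geodesic spheres, using Rauch/Hessian comparison.

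Combining these observations in the case $\sec_M \leq K_1 \le 0$ (write $K_1 = -K$, $K\ge 0$) yields $(\partial_t - \Delta_M) u \geq 0$, so $u$ is a supersolution of the heat equation on $M$. The heat kernel $P_t(x,\cdot)$ solves the equation with the same initial datum $\delta_x$. To conclude $P_t(x,y) \leq u(t,y) = P^{K_1}_t(d(x,y))$ I would apply the parabolic maximum principle; the delta-initial condition is handled by the standard device of running the argument from time $\varepsilon > 0$, using that both $P_\varepsilon(x,\cdot)$ and $u(\varepsilon,\cdot)$ are smooth and $u(\varepsilon,\cdot)\to \delta_x$ in the weak sense, then letting $\varepsilon \downarrow 0$. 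The reverse case $\sec_M \geq -K_2$ is entirely symmetric: the inequality in (b) reverses, $(\partial_t - \Delta_M)u \leq 0$, so $u$ is a subsolution, and the maximum principle gives $P_t(x,y) \geq P^{K_2}_t(d(x,y))$.

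The main obstacle is the application of the maximum principle on the non-compact $M$. The Cartan-Hadamard geometry ensures stochastic completeness, but to compare a supersolution with the heat kernel one needs either a growth bound on $u - P_t(x,\cdot)$ or a suitable exhaustion by relatively compact domains. I would use the latter: on a geodesic ball $B(x,R)$ with Dirichlet boundary, the comparison is immediate from the classical maximum principle, and then $R\to\infty$ recovers the statement, since on $M$ the Dirichlet heat kernels converge monotonically to $P_t$ and $u$ is bounded on each compact time-space set away from $t=0$. Everything else is routine once the radial monotonicity of $P^K_t$ and the Laplacian comparison are in hand.
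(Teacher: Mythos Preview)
The paper does not prove this statement at all; it simply cites \cite{Cheeger1981ALB}. Your outline is essentially the classical Cheeger--Yau argument itself, and the supersolution half ($\sec_M\le -K_1$) is correct as written: on $B(x,R)$ the Dirichlet kernel $P_t^R(x,\cdot)$ vanishes on $\partial B(x,R)$ while the lifted model kernel $u$ is positive there, so the parabolic maximum principle gives $P_t^R\le u$, and letting $R\to\infty$ (using $P_t^R\uparrow P_t$) finishes that direction.

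There is, however, a genuine gap in the subsolution half. You claim the exhaustion argument is symmetric, but it is not: for $\sec_M\ge -K_2$ you want $u\le P_t$, yet on $\partial B(x,R)$ one has $u>0=P_t^R$, so the boundary comparison goes the wrong way and the maximum principle on $B(x,R)$ does \emph{not} yield $u\le P_t^R$. The standard remedy (and what Cheeger--Yau actually do) is to replace $u$ by the lift of the \emph{Dirichlet} model kernel $\tilde P_t^R$ on the ball of radius $R$ in the space form: this is again radial and radially decreasing, so the same computation shows it is a subsolution on $B(x,R)$, and now it vanishes on $\partial B(x,R)$. The maximum principle then gives $\tilde P_t^R(r_x(\cdot))\le P_t^R(x,\cdot)$, and sending $R\to\infty$ on both sides recovers $P^{K_2}_t(d(x,y))\le P_t(x,y)$. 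With this correction your sketch is complete.
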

\begin{proof}
    See \cite{Cheeger1981ALB}.
\end{proof}

\begin{theorem}\label{Davies-Mandouvalous}
    Suppose $t>0,x,y\in \mathbb{H}^{n}_1$, the $n-$dimensional hyperbolic space. The heat kernel $P^1_t(x,y)$ satisfies
    for some $c,C>0$ depending on $n$ $$ch(t,d(x,y))\leq P^1_t(x,y)\leq Ch(t,d(x,y)),$$
    where the Davies-Mandouvalous function $h:\R_+\times \R_+\to \R+$ is defined by $$h(t,z):=t^{-\frac{n}{2}}(1+t+z)^{\frac{n-3}{2}}(1+z)\exp\left(-\frac{z^2}{4t}-\frac{(n-1)^2t}{4}-\frac{(n-1)z}{2}\right).$$
\end{theorem}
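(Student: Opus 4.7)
The plan is to exploit the explicit radial heat-kernel formulas available on real hyperbolic space and then carry out a careful asymptotic analysis across the natural $(t,r)$ regimes.

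First, I would start from the closed-form expressions for $P_t^1$ in low dimensions: in dimension $1$ one has the Euclidean Gaussian, and in dimension $2$ one has the classical integral representation
$$P_t^{1,(2)}(r) = \frac{\sqrt{2}\,e^{-t/4}}{(4\pi t)^{3/2}} \int_r^{\infty} \frac{s\, e^{-s^2/(4t)}}{\sqrt{\cosh s - \cosh r}}\, ds.$$
I would then iterate the classical dimension-shift recursion
$$P_t^{1,(n+2)}(r) = -\frac{e^{-n t}}{2\pi \sinh r}\,\partial_r P_t^{1,(n)}(r)$$
to build explicit formulas in every dimension: for odd $n$ this produces an elementary expression, while for even $n$ it produces a subordination-type integral involving the lower-dimensional kernels. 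In either form the factor $e^{-(n-1)^2 t/4}$, corresponding to the bottom of the $L^2$-spectrum, can be pulled out at the start.

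Next, I would split the parameter space $(t,z)$ into four regimes (small versus large $t$, combined with small versus large $z$) and extract the dominant contribution in each. The three exponential factors $e^{-(n-1)^2 t/4}$, $e^{-z^2/(4t)}$ and $e^{-(n-1)z/2}$ arise respectively from the spectral gap, the Gaussian part of the heat flow, and the $\sinh^{n-1} r$ volume growth encoded in the Jacobian of the exponential map. For the upper bound I would estimate the explicit formula termwise, retaining only the leading exponential behaviour and the largest polynomial correction in each regime. For the lower bound I would show that these dominant terms are not cancelled by subleading pieces; this is the delicate part, typically handled by isolating the leading behaviour of the spherical functions $\phi_\lambda(r)$ and then controlling the remainder via a stationary-phase type argument.

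The main obstacle is recovering the precise polynomial prefactor $t^{-n/2}(1+t+z)^{(n-3)/2}(1+z)$ in both directions simultaneously. The exponential factors follow from relatively soft tools such as semigroup domination and volume comparison, but the polynomial correction is genuinely dimension-sensitive and requires tracking subleading contributions through the recursion in both the $z\to 0$ and $z\to\infty$ regimes. I would follow the strategy of Davies and Mandouvalos, interpolating the short-time Gaussian asymptotics with the large-time spectral asymptotics so that the single expression $h(t,z)$ captures the correct behaviour in every regime.
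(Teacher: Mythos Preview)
The paper does not actually prove this theorem: its entire proof reads ``See \cite{Davies1988HeatKB}.'' The result is quoted as a known heat-kernel estimate from the literature (Davies and Mandouvalos, 1988) and is used as a black box in combination with the Cheeger--Yau comparison to obtain Theorem~\ref{thm: Heat Kernel Bound}. So there is no approach in the paper to compare against.

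Your sketch is a reasonable outline of how the original Davies--Mandouvalos argument proceeds---the dimension-shift recursion, the extraction of the spectral-gap factor, and the regime-by-regime analysis of the polynomial prefactor are all part of that story. But for the purposes of this paper none of that is needed; the correct ``proof'' here is simply a citation.
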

\begin{proof}
    See \cite{Davies1988HeatKB}.
\end{proof}

Our main tool for the rest of the paper is the following theorem.

\begin{theorem}\label{thm: Heat Kernel Bound}
    Let $M$ be a Hadamard manifold.
    Suppose $t>0,x,y\in M$. If $\sec_M\leq -K_1<0$, the heat kernel $P_t(x,y)$ satisfies \begin{equation}\label{DMUB}
      P_t(x,y)\leq C h(K_1t,\sqrt{K_1}d(x,y))
    \end{equation}
    where $C>0$ depends on $n$ and $K_1$. Similarly, if $\sec_M\geq-K_2>-\infty$, we have \begin{equation}\label{DMLB}
        P_t(x,y)\geq ch(K_2t,\sqrt{K_2}d(x,y))
    \end{equation}
    where $c>0$ depends on $n$ and $K_2$.
\end{theorem}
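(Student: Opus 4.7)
The plan is to combine the two stated theorems, Cheeger--Yau (Theorem~\ref{Cheeger-Yau}) and Davies--Mandouvalos (Theorem~\ref{Davies-Mandouvalous}), via a scaling that reduces the constant-curvature space form of curvature $-K_1$ (resp.\ $-K_2$) to the standard hyperbolic space $\mathbb{H}^n_1$ of curvature $-1$. The point is that Davies--Mandouvalos is only stated in curvature $-1$, so all of the $K_1$--dependence in the upper bound must come from tracking how the heat kernel, the time variable, and the distance transform under a global rescaling of the metric.

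First I would apply Theorem~\ref{Cheeger-Yau} to reduce to the model space: the sectional curvature bound $\sec_M\le -K_1$ gives
\[
P_t(x,y)\le P^{K_1}_t(d(x,y)),
\]
where $P^{K_1}_t(r)$ is the heat kernel on the Hadamard manifold of constant sectional curvature $-K_1$. Similarly, $\sec_M\ge -K_2$ gives the matching lower bound $P_t(x,y)\ge P^{K_2}_t(d(x,y))$. This reduces the problem to bounding $P^K_t(r)$ by the Davies--Mandouvalos expression $h(Kt,\sqrt{K}\,r)$, both above and below, up to a multiplicative constant depending on $n$ and $K$.

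Next I would carry out the scaling step. If $g$ is the metric of constant curvature $-K$ on the model space, then $\tilde g=Kg$ has constant curvature $-1$ and hence $(M,\tilde g)$ is isometric to $\mathbb{H}^n_1$. Under $g\mapsto Kg$ one has $d_{\tilde g}=\sqrt{K}\,d_g$, $\mathrm{dvol}_{\tilde g}=K^{n/2}\mathrm{dvol}_g$, and $\Delta_{\tilde g}=K^{-1}\Delta_g$. A direct check using conservation of probability $\int P_t\,\mathrm{dvol}=1$ therefore gives the scaling relation
\[
P^{K}_t(r)=K^{n/2}\,P^{1}_{Kt}\!\left(\sqrt{K}\,r\right),
\]
where $P^1$ denotes the heat kernel of $\mathbb{H}^n_1$. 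Applying Theorem~\ref{Davies-Mandouvalous} to $P^1_{Kt}(\sqrt{K}\,r)$ then yields
\[
c\,h\bigl(Kt,\sqrt{K}\,r\bigr)\le K^{-n/2}P^K_t(r)\le C\,h\bigl(Kt,\sqrt{K}\,r\bigr),
\]
and the factor $K^{n/2}$ can be absorbed into the $n,K$-dependent constants $c,C$.

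Combining the two steps gives \eqref{DMUB} from $\sec_M\le -K_1$ and \eqref{DMLB} from $\sec_M\ge -K_2$. The only subtle point, and the one I would expect to be the main obstacle if one is not careful, is to keep the scaling of $t$ and $d(x,y)$ consistent: the time enters as $Kt$ in both the prefactor $t^{-n/2}$ and the Gaussian term of $h$, while the distance enters as $\sqrt{K}\,d(x,y)$, and any sign error in the exponents of $K$ would break the fact that the resulting bound reduces to the usual Euclidean Gaussian as $K\to 0$. Once the scaling identity is verified, absorbing $K^{n/2}$ and the $n$-dependent constants from Davies--Mandouvalos into the single constants $C$ and $c$ is immediate.
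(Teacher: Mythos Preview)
Your proposal is correct and follows essentially the same approach as the paper: combine the Cheeger--Yau comparison with the Davies--Mandouvalos bounds via the scaling identity $P^{K}_t(r)=K^{n/2}P^{1}_{Kt}(\sqrt{K}\,r)$, absorbing the $K^{n/2}$ factor into the constants. Your derivation of the scaling identity is in fact more detailed than the paper's, which simply states it.
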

\begin{proof}
    Combine Theorem \ref{Cheeger-Yau} and Theorem \ref{Davies-Mandouvalous} with the identity $$\label{heat kernel curvature scaling}
    P^K_t(r)=K^{\frac{n}{2}}P^1_{Kt}(\sqrt{K}r),
    $$
    which can be derived easily from the fact that scaling the metric tensor by $1/\sqrt{K}$ multiplies all sectional curvatures by $K$.
\end{proof}

We first deduce  the following from the above heat kernel estimate.

\begin{proposition}\label{Heat Kernel contraction and spectral gap}
    Let $M$ be a Riemannian manifold, and $P_t$ be the heat kernel at time $t>0$. 
    \begin{enumerate}[label=(\roman*)]
        \item If $u_0\in L^\infty(M)$, then for all $x\in M$ we have \begin{equation}\label{L inf contraction}
            \abs{P_tu_0(x)}\leq \norm{u_0}_{\infty}
        \end{equation}
        The following assume additionally that $M$ is a Cartan-Hadamard manifold with $\sec_M\leq -K_1<0$.
        \item If $u_0\in L^r(M)\cap L^\infty(M)$, $2\leq r<\infty$, then for all $t>0,x\in M$ we have \begin{equation}\label{2 to inf spec gap}
            \abs{P_tu_0(x)}\leq C e^{-\frac{(n-1)^2}{2r}K_1t}\norm{u_0}_\infty.
        \end{equation}
        \item If $u_0\in L^r(M)\cap L^\infty(M)$, $1\leq r\leq 2$, then for all $t>0,x\in M$ we have \begin{equation}\label{1 to 2 spec gap}
            \abs{P_tu_0(x)}\leq C e^{-\frac{(n-1)^2}{4}K_1t}\norm{u_0}_\infty.
        \end{equation}
    \end{enumerate}
\end{proposition}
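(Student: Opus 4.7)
The overall plan uses three ingredients: sub-Markov contraction for (i), an on-diagonal heat-kernel bound driving $L^2 \to L^\infty$ decay, and Riesz--Thorin interpolation to close (ii) and (iii).

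Part (i) is immediate from $\int_M P_t(x,y)\,dV(y) \leq 1$: since the heat semigroup on a complete manifold is sub-Markov, $|P_t u_0(x)| \leq \|u_0\|_\infty$. For (ii) and (iii), my first step would be to specialize Theorem \ref{thm: Heat Kernel Bound} at $y = x$:
\[
P_t(x,x) \leq C h(K_1 t, 0) = C(K_1 t)^{-n/2}(1+K_1 t)^{(n-3)/2}e^{-(n-1)^2 K_1 t/4}.
\]
For $t$ bounded away from $0$ the polynomial prefactor is harmless and one gets $P_t(x,x) \leq C e^{-(n-1)^2 K_1 t/4}$. Combining self-adjointness of $P_t$ with the semigroup identity $\|P_t(x,\cdot)\|_2^2 = P_{2t}(x,x)$ and Cauchy--Schwarz then gives the $L^2 \to L^\infty$ estimate
\[
|P_t u_0(x)| \leq \|P_t(x,\cdot)\|_2 \|u_0\|_2 \leq C e^{-(n-1)^2 K_1 t/4} \|u_0\|_2,
\]
at McKean's sharp spectral-gap rate $(n-1)^2 K_1/4$. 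Equally, $\|P_t(x,\cdot)\|_\infty = P_t(x,x)$ yields an $L^1 \to L^\infty$ bound at the same rate.

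From here Riesz--Thorin interpolation closes both cases. For (ii) with $r \in [2, \infty)$, I would interpolate the $L^2 \to L^\infty$ bound (rate $(n-1)^2 K_1/4$) against the trivial $L^\infty \to L^\infty$ bound from (i) (rate $0$) with parameter $\theta = 2/r$; the interpolated rate is $\theta \cdot (n-1)^2 K_1/4 = (n-1)^2 K_1/(2r)$. For (iii) with $r \in [1,2]$, I would interpolate the $L^1$- and $L^2$-endpoints, both carrying the rate $(n-1)^2 K_1/4$, which is preserved. I expect the main technical subtlety to lie in the small-$t$ regime, where the on-diagonal bound diverges as $t \to 0$. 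To obtain the uniform-in-$t$ estimate, one splits at some $t_0>0$, using the short-time contraction from (i) for $t \leq t_0$ (absorbing the resulting $e^{(n-1)^2 K_1 t_0/(2r)}$ factor into $C$) and the interpolation argument for $t > t_0$, yielding the stated form with $C$ depending on $n$ and $K_1$.
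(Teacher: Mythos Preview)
Your proposal is correct and follows essentially the same route as the paper: sub-Markov contraction for (i), the Davies--Mandouvalos on-diagonal bound to get $L^2\to L^\infty$ and $L^1\to L^\infty$ decay at rate $(n-1)^2K_1/4$ for $t\ge 1$, Riesz--Thorin interpolation against (i) for $r\in[2,\infty)$ and between the $L^1$ and $L^2$ endpoints for $r\in[1,2]$, and the $L^\infty$ contraction to cover small $t$. One small imprecision: the equality $\|P_t(x,\cdot)\|_\infty = P_t(x,x)$ is not guaranteed on a general Cartan--Hadamard manifold; the paper instead bounds $\sup_{y\in M} P_t(x,y)$ directly via \eqref{DMUB} (or equivalently via $P_t(x,y)^2\le P_t(x,x)P_t(y,y)$ together with the uniform on-diagonal bound), which gives the same exponential rate.
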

\begin{proof} Denote by $x$ a point in $M$. For (i), since $\int_M P_t(x, y) dy\leq 1$,
    $$|P_t u_0(x)| = \left|\int_M P_t(x, y) u_0(y) ~dy\right| \leq \norm{P_t(x,\cdot)}_{1} \norm{u_0}_{\infty} \leq \norm{u_0}_{\infty}.$$
    We claim the following holds for $t\geq 1,x\in M$ if $\sec_M\leq -K_1<0$.
    \begin{itemize}
        \item If $u_0\in L^2(M)$, then there exists $C>0$ such that \begin{equation}\label{L2 spec gap t>1}
            \abs{P_tu_0(x)}\leq Ce^{-\frac{(n-1)^2}{4}K_1t}\norm{u_0}_2.
        \end{equation}
        \item If $u_0\in L^1(M)$, then there exists $C>0$ such that\begin{equation}\label{L1 spec gap t>1}
            \abs{P_tu_0(x)}\leq Ce^{-\frac{(n-1)^2}{4}K_1t}\norm{u_0}_1.
        \end{equation}
    \end{itemize}

    For \eqref{L2 spec gap t>1}, we use the upper bound \eqref{DMUB} which yields: \begin{align*}
        |P_t u_0(x)|^2& =\left| \int_M P_t(x,y) u_0(y) dy \right|^2 \le \int_M P_t(x,y)^2  dy  \int_M u_0^2 (y) dy \\
        & \le p_{2t}(x,x) \| u_0 \|_{2}^2 \le  \frac{C}{t^{3/2}}e ^{-\frac{(n-1)^2}{2}K_1t}\| u_0 \|_{2}^2\le Ce ^{-\frac{(n-1)^2}{2}K_1t}\| u_0 \|_{2}^2.
    \end{align*}
    Taking square roots on both sides proves \eqref{L2 spec gap t>1}. A similar argument works for \eqref{L1 spec gap t>1}: since $u_0\in L^1(M)$, $$|P_t u(x)| = \left|\int_M P_t(x, y) u(y) dy\right| \leq \norm{u_0}_{1}\sup_{y\in M}P_t(x,y)\leq \frac{C}{t^{3/4}}e^{-\frac{(n-1)^2}{4}K_1t}\norm{u_0}_{1}\le Ce^{-\frac{(n-1)^2}{4}K_1t}\norm{u_0}_{1}.$$
    Using the Riesz-Thorin interpolation theorem (see for example \cite{Folland1984RealAM}, Theorem 6.27) with \eqref{L inf contraction} and \eqref{L2 spec gap t>1} gives for $u_0\in L^r(M),2<r<+\infty$ \begin{equation}\label{t>1 r>2 spec gap}
        \abs{P_tu_0(x)}\leq Ce^{-\frac{(n-1)^2}{2r}K_1t}\norm{u_0}_{r}
    \end{equation}
    for $u_0\in L^r(M),2\leq r<+\infty$. Similarly, combining \eqref{L1 spec gap t>1} and \eqref{L2 spec gap t>1} gives\begin{equation}\label{t>1 1<r<2 spec gap} \abs{P_tu_0(x)}\leq Ce^{-\frac{(n-1)^2}{4}K_1t}\norm{u_0}_{r}
    \end{equation}
    for every $t>1$, $u_0\in L^r(M),1 \leq r\leq 2$.\\
    From here, we see that the assumption $u_0\in L^r(M),2\leq r<+\infty$ with \eqref{L inf contraction} and \eqref{t>1 r>2 spec gap} gives us \eqref{2 to inf spec gap} with an elementary argument similar to the one made in \cite{baudoin2024parabolic}, Corollary 2.13.     Similarly, \eqref{1 to 2 spec gap} follows from combining \eqref{L inf contraction} with \eqref{t>1 1<r<2 spec gap}. This finishes the proof.
\end{proof}

\subsection{Fractional Laplacian and Colored Noise}

\begin{definition}\label{kernel for frac Laplacian}
    Suppose $M$ is a Cartan-Hadamard manifold satisfying $\sec_M\leq-K_1<0$. 
    We define a family of kernels $\{G_\alpha\}_{\alpha >0 }$ by 
    \begin{align}\label{fractional correlation}
    G_\alpha(x,y)=\frac{1}{\Gamma(\alpha)}\int_0^{+\infty}t^{\alpha-1}P_t(x,y) dt, \quad x,y \in M, x\neq y.
    \end{align}
    \end{definition}
    Thanks to \eqref{DMUB} the integral \eqref{fractional correlation} is indeed convergent for every $\alpha >0$ when $x\neq y$. Our situation  is therefore quite different from the case of Euclidean space and Sub-Riemannian Heisenberg groups, as seen in \cite{BAUDOIN2023109920}. When $\alpha \le n/2$ the integral \eqref{fractional correlation} is divergent for $x=y$. Indeed as will be seen in Lemma \ref{Lower Bound for correlation}, for $\alpha < n/2$, $G_\alpha(x,y)$ has a singularity of order $1/d(x,y)^{n-2\alpha}$  while for $\alpha = n/2$ the singularity has order $ | \ln d(x,y)|$. When $\alpha > n/2$ the integral \eqref{fractional correlation} is also convergent for $x =y$. 
    
    We note that $G_\alpha$  is the kernel of the operator $(-\Delta)^{-\alpha}$, i.e. for any smooth and compactly supported $f\in C_c^\infty(M)$, $$(-\Delta)^{-\alpha} f(x)=\int_M f(y)G_\alpha(x,y)dy.$$
    We also note that $\alpha=0$ corresponds to Dirac's delta on the diagonal and for $\alpha=1$ the correlation function is the Green's function for the Laplace equation.

\begin{definition}[Sobolev Space $\mathcal{W}^{-\alpha}(M)$] \label{sobo_def}
Suppose $M$ is a Cartan-Hadamard manifold satisfying $\sec_M\leq-K_1<0$. For $\alpha>0$, we define the Sobolev space $\mathcal{W}^{-\alpha}(M)$ as the completion of $ C_c^\infty(M)$ with respect to the inner product
\begin{align}\label{Sobo_norm}
\left\langle  f ,g \right\rangle_{\mathcal{W}^{-\alpha}(M)}=\int_M \int_M f(x) g(y) G_{2\alpha}(x,y) dx dy= \int_{M} \left[(-\Delta)^{-\alpha} f\right](x)\left[ (-\Delta)^{-\alpha} g\right](x) dx,
\end{align}
where the kernel $G_{2\alpha}$ is the family in Definition \ref{kernel for frac Laplacian}.
\end{definition}

\begin{definition}\label{def-frac-Gaus-field}
Consider  $\alpha>0$ and the following Hilbert space of space-time functions:
\begin{align}\label{eq-hilb}
\mathcal{H}_\alpha=L^2(\mathbb{R}_+, \mathcal{W}^{-\alpha}(M)),  
\end{align}
    where $\mathcal{W}^{-\alpha}(M)$ is the Sobolev space introduced in Definition \ref{sobo_def}. On a complete probability space $(\Omega, \mathcal{G},\mathbf{P})$ we define a centered Gaussian family $\{W(\phi); \phi\in \mathcal{H}_\alpha\}$, whose covariance is given by 
\begin{align}\label{eq-Gau-cov}
\mathbf E\left[ W(\varphi) W(\psi)\right]
=&
\int_{\R_+}\ \left\langle \varphi (t,\cdot) , \psi (t,\cdot)\right\rangle_{\mathcal{W}^{-\alpha}(M)}  dt \notag\\
=&
\int_{\R_+} \int_{M^2} \varphi (t,x)  \psi (t,y)G_{2\alpha}(x,y)dxdy dt
\, ,
\end{align}
for $\varphi$, $\psi$ in $\mathcal{H}_\alpha$. This family is called the fractional Gaussian fields on $M$.
\end{definition}

\subsection{$L^2$ estimate for the fractional Laplacian of the heat kernel}

We will use the heat kernel bound to obtain upper  estimates for $\norm{-(\Delta)^{-\alpha}P_t(x,\cdot)}_{L^2(M)}$ which plays a central role in our study.

\begin{lemma}\label{analogue of lemma 2.22}
    Suppose $M$ is a Cartan-Hadamard manifold satisfying $\sec_M\leq-K_1<0$. Let $\alpha> 0$. There exists $C=C_{\alpha,K_1,n}>0$ such that for every $t >0$
    \[
        \sup_{x\in M}\norm{(-\Delta)^{-\alpha}p_t(x,\cdot)}_{L^2(M)}\leq \begin{cases}
        Ce^{-\frac{(n-1)^2}{4}K_1t} ( 1_{[0,1/2K_1]}(t)t^{\alpha-\frac{n}{4}} +1_{[1/2K_1,+\infty)}(t)(1+K_1t)^{-3/4} )& \alpha <\frac{n}{4}\\
        Ce^{-\frac{(n-1)^2}{4}K_1t}( 1_{[0,1/2K_1]}(t)|\ln (t)| +1_{[1/2K_1,+\infty)}(t)(1+K_1t)^{-3/4} )& \alpha =\frac{n}{4}\\
        Ce^{-\frac{(n-1)^2}{4}K_1t} ( 1_{[0,1/2K_1]}(t) +1_{[1/2K_1,+\infty)}(t)(1+K_1t)^{-3/4} )& \alpha >\frac{n}{4}.
        \end{cases}
    \] 
\end{lemma}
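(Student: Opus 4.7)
The strategy is to reduce the $L^2$ norm squared to a scalar integral of the diagonal heat kernel $P_u(x,x)$, then apply the bound from Theorem \ref{thm: Heat Kernel Bound} and dissect the resulting integral according to the relative sizes of $\alpha$, $n/4$, and $K_1 t$.

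First I would write $(-\Delta)^{-\alpha}p_t(x,\cdot)(y)=\int_M G_\alpha(y,z)P_t(x,z)\,dz$, square, apply Fubini, and use the ``composition'' identity $\int_M G_\alpha(y,z_1)G_\alpha(y,z_2)\,dy=G_{2\alpha}(z_1,z_2)$ which follows from \eqref{fractional correlation}, the semigroup property of $P_t$, and the Beta identity $B(\alpha,\alpha)\Gamma(2\alpha)=\Gamma(\alpha)^2$. Combined with one more application of the semigroup property this yields the clean formula
\begin{equation*}
\norm{(-\Delta)^{-\alpha}p_t(x,\cdot)}_{L^2(M)}^2 = \frac{1}{\Gamma(2\alpha)}\int_0^{\infty} s^{2\alpha-1} P_{s+2t}(x,x)\,ds.
\end{equation*}
Plugging in the on-diagonal bound $P_{s+2t}(x,x)\leq C(K_1(s+2t))^{-n/2}(1+K_1(s+2t))^{(n-3)/2}\exp(-\frac{(n-1)^2}{4}K_1(s+2t))$ from Theorem \ref{thm: Heat Kernel Bound}, and splitting the exponential as $e^{-(n-1)^2 K_1 t/2}\cdot e^{-(n-1)^2K_1 s/4}$, pulls out the required prefactor $e^{-(n-1)^2 K_1 t/4}$ after taking square roots. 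Everything else is reduced to estimating the remaining $s$-integral.

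For $t\geq 1/(2K_1)$, one has $K_1(s+2t)\geq 1$ for all $s\geq 0$, so the polynomial factor is uniformly bounded by $C(K_1(s+2t))^{-3/2}\leq C(K_1 t)^{-3/2}$, and $\int_0^\infty s^{2\alpha-1}e^{-(n-1)^2K_1 s/4}ds$ converges for every $\alpha>0$. After taking the square root this produces the $(1+K_1 t)^{-3/4}$ factor common to all three cases of the lemma, independent of $\alpha$. For $t<1/(2K_1)$ I would split the $s$-integral at $s=1/K_1$: the tail is again handled by the argument just described and contributes only an $O(1)$ constant, whereas for the head $s<1/K_1$ one has $K_1(s+2t)<2$, so the factor $(1+K_1(s+2t))^{(n-3)/2}$ and the exponentials in $s$ are $O(1)$, leaving the model integral $\int_0^{1/K_1} s^{2\alpha-1}(s+2t)^{-n/2}\,ds$.

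This model integral is analyzed by splitting at $s=2t$. When $\alpha<n/4$, the change of variables $s=2tv$ converts it to $(2t)^{2\alpha-n/2}\int_0^{1/(2K_1 t)} v^{2\alpha-1}(1+v)^{-n/2}\,dv$, whose $v$-integral is bounded uniformly because $0<\alpha<n/4$, giving the $t^{\alpha-n/4}$ factor after taking the square root. When $\alpha>n/4$ the integrand $s^{2\alpha-1-n/2}$ is integrable near $0$, giving a bound uniform in $t$. The critical case $\alpha=n/4$ produces $\int_{2t}^{1/K_1}s^{-1}ds\sim |\ln t|$ before the square root, and bounding $\sqrt{|\ln t|}$ by $|\ln t|$ on $(0,1/(2K_1)]$ yields the stated $|\ln t|$ factor (not sharp, but sufficient for the applications later in the paper). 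The main obstacle is the bookkeeping at the critical exponent $\alpha=n/4$, together with ensuring that the exponential prefactor $e^{-(n-1)^2K_1 t/4}$ emerges with the same constant across all three cases and matches seamlessly at the transition $t=1/(2K_1)$.
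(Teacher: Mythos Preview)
Your proposal is correct, but the initial reduction differs from the paper's. The paper does \emph{not} compute the squared $L^2$ norm via the composition identity $G_\alpha*G_\alpha=G_{2\alpha}$; instead it applies Minkowski's integral inequality directly to $(-\Delta)^{-\alpha}p_t(x,\cdot)=\frac{1}{\Gamma(\alpha)}\int_0^\infty s^{\alpha-1}P_{s+t}(x,\cdot)\,ds$, obtaining
\[
\|(-\Delta)^{-\alpha}p_t(x,\cdot)\|_{L^2}\le \frac{1}{\Gamma(\alpha)}\int_0^\infty s^{\alpha-1}p_{2(s+t)}(x,x)^{1/2}\,ds,
\]
and then inserts the Davies--Mandouvalos upper bound. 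Your route gives the exact identity $\|(-\Delta)^{-\alpha}p_t(x,\cdot)\|_{L^2}^2=\frac{1}{\Gamma(2\alpha)}\int_0^\infty s^{2\alpha-1}P_{s+2t}(x,x)\,ds$ before any estimate, which is arguably cleaner and in the critical case $\alpha=n/4$ actually yields $\sqrt{|\ln t|}$ rather than $|\ln t|$ (you then discard this gain). The paper's Minkowski route, by contrast, produces the model integral $\int s^{\alpha-1}(s+t)^{-n/4}e^{-cK_1 ts}\,ds$ whose logarithmic divergence at $\alpha=n/4$ gives $|\ln t|$ directly for the norm. After this first step the two analyses are essentially parallel: both split according to whether $K_1(t+s)$ is small or large, both use the scaling substitution $s=tu$ (or $s=2tv$) for the small-time head, and both read off the common $(1+K_1 t)^{-3/4}$ tail from the $(n-3)/2$ exponent in the Davies--Mandouvalos function.
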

\begin{proof}

Applying Fubini to change the order of integration, the Minknowski integral inequality, the semigroup property of the heat kernel, and our heat kernel upper bound, we have
\begin{equation}\label{eq:DeltaHK}
\begin{split}
\|(-\Delta)^{-\alpha}P_t(x,\cdot)\|_{L^2(M)}  
 & \le \frac{1}{\Gamma(\alpha)}\int_0^{+\infty} s^{\alpha-1} P_{2(s+t)}(x,x)^{1/2} ds \\
 &\leq C \int_0^{+\infty} s^{\alpha-1} h(0,2K_1(t+s))^{1/2} ds \\
 &\leq C\int_0^{+\infty} s^{\alpha-1} (t+s)^{-n/4}(1+2K_1(t+s))^{\frac{n-3}{4}} \exp \left( -K_1\frac{(n-1)^2}{4}(t+s)\right) ds\\
 &=C e^{-bK_1t}\int_0^{+\infty} s^{\alpha-1}e^{-bK_1s} [(2K_1(t+s))^{-n}(1+2K_1(t+s))^{n-3}]^{\frac{1}{4}}ds,
\end{split}
\end{equation}
with $b=\frac{(n-1)^2}{4}$ and $T=2K_1(t+s)$ and where the second inequality used \eqref{DMUB}. Notice that $T<1\iff s<\frac{1}{2K_1}-t$. Continuing, we have 
\begin{align*}
    &C e^{-bK_1t}\int_0^{+\infty} s^{\alpha-1}e^{-bK_1s} [(1+T)^{-3}T^{-n}(1+T)^n]^{\frac{1}{4}}ds\\
    = &Ce^{-bK_1t}\int_0^{+\infty} s^{\alpha-1}e^{-bK_1s} \left[(1+T)^{-3}\sum_{j=0}^{n}\binom{n}{j}T^{j-n}\right]^{\frac{1}{4}}ds\\
    \leq& C e^{-bK_1t}\int_0^{+\infty} s^{\alpha-1}e^{-bK_1s} [n!(1+T)^{-3}\max(T^{-n},1)]^{\frac{1}{4}}ds\\
    \leq& C e^{-bK_1t}\left(\int_0^{\max(\frac{1}{2K_1}-t),0)} s^{\alpha-1}e^{-bK_1s} T^{-\frac{n}{4}}ds+\int_{\max(\frac{1}{2K_1}-t),0)}^{+\infty} s^{\alpha-1}e^{-bK_1s} (1+T)^{-\frac{3}{4}}ds\right)\\
    =&C e^{-bK_1t}\left( 1_{[0,\frac{1}{2K_1})}(t)\int_0^{+\infty}s^{\alpha-1}e^{-bK_1s} T^{-\frac{n}{4}}ds+1_{[\frac{1}{2K_1},+\infty)}(t)\int_0^{+\infty}s^{\alpha-1}e^{-bK_1s} (1+T)^{-\frac{3}{4}}ds \right).
\end{align*}

For the first integral, we treat the case $\alpha<\frac{n}{4}$; the other cases are similar or easier. Applying the substitution $s=tu$ gives us \begin{align*}
    \int_0^{+\infty}s^{\alpha-1}e^{-bK_1s} T^{-\frac{n}{4}}ds
    \leq Ct^{\alpha-\frac{n}{4}}\int_0^{+\infty}u^{\alpha-1}(1+u)^{-\frac{n}{4}}e^{-bK_1tu}du
    \leq Ct^{\alpha-\frac{n}{4}}.
\end{align*}
For the second integral, applying $(1,\infty)$ H\"older with $(1+T)^{-\frac{3}{4}}=(1+2K_1(t+s))^{-\frac{3}{4}}\leq (1+K_1t)^{-\frac{3}{4}}$ gives \begin{align*}
    \int_0^{+\infty}s^{\alpha-1}e^{-bK_1s} (1+T)^{-\frac{3}{4}}ds\leq (1+K_1t)^{-\frac{3}{4}}\int_0^{+\infty}s^{\alpha-1}e^{-bK_1s}ds\leq C(1+K_1t)^{-\frac{3}{4}}.
\end{align*}
This completes the proof.
\end{proof}

\section{Well-posedness of the stochastic heat equation and upper bound for moments of the solution}

On the Cartan-Hadamard manifold $M$ we consider the stochastic heat equation

\begin{equation}\label{eq:pam2}
\partial_{t} u(t,x) = \Delta u(t,x) + \beta u(t,x) \cdot W(t,x),\quad u(0,x)=u_0(x).
\end{equation}
where $\beta >0$ is a parameter and $W$ is the fractional noise considered in the previous section. We consider  solutions in the mild sense of It\^o-Walsh,  that is, one rewrites our equation of interest  as
\begin{equation}\label{eq:PAM-mild form}
    u(t,x)=J_0(t,x)+\beta I(t,x),
\end{equation}
where $J_0(t,x)=P_tu_0(x)=\int_M P_t(x,y)u(0,y)dy$ is the solution to the homogeneous heat equation and $I(t,x)$ is the It\^o-Walsh the stochastic integral (see \cite{Walsh1986AnIT} for a rigorous definition of this integral) given by
\begin{equation}
    I(t,x)=\int_0^t\int_{M}P_{t-s}(x,y)u(s,y)W(ds,dy).
\end{equation}
A solution to \eqref{eq:PAM-mild form} is then defined as below.
\subsection{Definition of Solution and Chaos Expansion}
\begin{definition}
    \label{def of Ito sol}
A process $u=\{u(t,x); (t,x)\in\mathbb{R}_{+}\times M \}$ is called a random field solution of~\eqref{eq:pam2} in the It\^o-Walsh sense \cite{Walsh1986AnIT} if the following conditions are met:
\begin{enumerate}
\item $u$ is adapted;

\item $u$ is jointly measurable with respect to $\mathcal{B}(\mathbb{R}_{+}\times M )\otimes \mathcal{G}$; 
\item $\mathbf{E}(I(t,x)^2)<\infty$ for all $(t,x)\in\mathbb{R}_{+}\times M$;
\item The function $(t,x)\to I(t,x)$ is continuous in $L^2(\Omega)$;
\item $u$ satisfies \eqref{eq:PAM-mild form} almost surely for all $(t,x)\in\mathbb{R}_{+}\times M$.
\end{enumerate}
\end{definition}

We will use chaos expansions, the setting for which is given below. Let us first denote by $\mathcal H_k$ the space 
\begin{align}\label{def:H_k}
\mathcal{H}_k=\mathrm{Span}\left(\left\{H_k(W_\alpha(\phi));\phi \in \mathcal H, \|\phi\|_{\mathcal H}=1\right\}\right),
\end{align} 
where $W_\alpha$ is the noise from Definition \ref{def-frac-Gaus-field}, Span means closure of the linear span in $L^2(\Omega)$, $H_k$ designates the $k$-th Hermite polynomial, and $\mathcal{H}=\mathcal{H}_\alpha$ is the Hilbert space from \eqref{eq-hilb}. The space $\mathcal{H}_k$ is called the $k$-th Wiener chaos. There exists a linear isometry $I_k$ between $\mathcal H^{\otimes k}$ (with modified norm $\sqrt{k!}\|\cdot\|_{\mathcal H^{\otimes k}}$) and $\mathcal H_k$ given by
\[
I_k\big(\phi^{\otimes k}\big)
=
k! \, H_k(W_\alpha(\phi)), \quad \text{for any } \phi\in \mathcal H \text{ with }\|\phi\|_{\mathcal H}=1.
\]
Let $u=\{u(t,x); t\ge 0, x\in M\}$ 
be a random field such that $\mathbf \E[u(t,x)^2]<\infty$ for all $t\ge 0$ and $x\in M$.
It is established in \cite{Nualart1995TheMC} that $u(t,x)$ has a Wiener chaos expansion of the form
\begin{equation}\label{eq:chaos expansion}
    u(t,x)=\mathbf \E[u(t,x)]+\sum_{k=1}^{\infty} I_k(f_k(\cdot, t,x)),
\end{equation}
where $f_k$'s are symmetric elements of $\mathcal H^{\otimes k}$ uniquely determined by $u$ and where the series converges in $L^2(\Omega)$. When $u$ is the solution to equation \eqref{eq:pam} according to Definition \ref{def of Ito sol}, by a standard iteration procedure (borrowed from \cite{Hu2007StochasticHE,Hu2014StochasticHE}) $u$ admits a chaos expansion as in \eqref{eq:chaos expansion} with $f_k$ given by 
\begin{equation}\label{eq:fk}
f_k(s_1,y_1,\ldots,s_k,y_k, t,x)=\frac{\beta^k}{k!} p_{t-s_{\sigma(k)}}(x, y_{\sigma(k)})\cdots  p_{s_{\sigma(2)}-s_{\sigma(1)}}(y_{\sigma(2)}, y_{\sigma(1)})  p_{s_{\sigma(1)}}u_0(y_{\sigma(1)}),
\end{equation}
where $\sigma$ denotes the permutation of $\{1,2,\cdots, k\}$ such that $0<s_{\sigma(1)}<\cdots <s_{\sigma(k)}<t$. In this setting we then have the following result.
\begin{proposition}\label{uni-SHE}
A process $\mathcal{U}=\{u(t,x); (t,x)\in \mathbb{R}_+\times M\}$ solves equation \eqref{eq:PAM-mild form} in the sense of Definition \ref{def of Ito sol} if and only if for every $(t,x)$ the random variable $u(t,x)$ admits a chaos decomposition \eqref{eq:chaos expansion}-\eqref{eq:fk}, where the family $\{f_k(\cdot, t,x); k\geq1\}$ satisfies 
\begin{equation}\label{finite chaos}
    \sum_{k=1}^{\infty} k! \|f_k(\cdot, t,x)\|_{\mathcal H^{\otimes k}}^2<\infty.
\end{equation}
\end{proposition}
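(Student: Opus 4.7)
The plan is to prove the two implications by directly matching the iterated Itô-Walsh integrals arising from Picard iteration of the mild form with the multiple Wiener integrals in the chaos expansion. The main algebraic content in both directions is the identity
\[
\int_0^t\!\!\int_M p_{t-s}(x,y)\, I_k\!\left(g_k(\cdot\,;s,y)\right) W(ds\,dy)
= I_{k+1}\!\left(\widetilde{p_{t-\cdot}(x,\cdot)\otimes g_k(\cdot\,;\cdot,\cdot)}\right),
\]
where $\widetilde{(\cdot)}$ denotes symmetrization in the $k+1$ variables; this follows from the usual identification of the Itô-Walsh integral with a divergence-type operator on an adapted integrand (see \cite{Nualart1995TheMC}).

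For the forward direction, assume $\mathcal{U}$ solves \eqref{eq:PAM-mild form} in the sense of Definition \ref{def of Ito sol}. Since $u(t,x)\in L^2(\Omega)$, it admits a unique chaos expansion $u(t,x)=\E[u(t,x)]+\sum_{k\geq 1} I_k(g_k(\cdot;t,x))$. I would then iterate \eqref{eq:PAM-mild form}: substituting $u(s,y)$ into the Itô-Walsh integral $N$ times gives
\[
u(t,x) = J_0(t,x) + \sum_{k=1}^{N}\beta^k J_k(t,x) + R_N(t,x),
\]
where $J_k(t,x)$ is the $k$-fold iterated Itô-Walsh integral of $p_{t-s_k}(x,y_k)\cdots p_{s_2-s_1}(y_2,y_1)J_0(s_1,y_1)$ over the simplex $\{0<s_1<\cdots<s_k<t\}$. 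Using the identity above recursively, each $J_k(t,x)$ equals $I_k(f_k(\cdot;t,x))$ with $f_k$ the symmetrization of the iterated kernel, which produces exactly \eqref{eq:fk} with the factor $1/k!$ coming from extending the simplex to the full cube. The uniqueness of chaos decomposition then forces $g_k=f_k$ (and $\E[u(t,x)]=J_0(t,x)$), while $R_N\to 0$ in $L^2(\Omega)$ as $N\to\infty$ combined with Itô's isometry yields \eqref{finite chaos}.

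For the converse, suppose $u(t,x)=J_0(t,x)+\sum_{k\geq 1} I_k(f_k(\cdot;t,x))$ with \eqref{finite chaos}. Then $\E[u(t,x)^2] = J_0(t,x)^2+\sum_k k!\|f_k\|^2<\infty$, giving condition (3); adaptedness and joint measurability follow since $f_k(\cdot;t,x)$ is supported on $\{s_i<t\}$ and is jointly measurable in $(t,x)$ by the continuity and measurability of $p_t$ and $J_0$, and $L^2$-continuity in $(t,x)$ follows from dominated convergence inside $\sum k!\|f_k\|^2$. To verify \eqref{eq:PAM-mild form}, I would expand $u(s,y)$ into its chaos and interchange the stochastic integral with the sum (justified by \eqref{finite chaos} and Itô's isometry); applying the boxed identity term-by-term produces a chaos series whose $(k+1)$-th kernel is precisely $\beta f_{k+1}(\cdot;t,x)$ by the recursive structure of \eqref{eq:fk}. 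Matching chaos levels yields $\beta I(t,x)=u(t,x)-J_0(t,x)$ almost surely.

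The hard part will be the symmetrization bookkeeping: verifying that recursively applying the boxed identity to the product $p_{t-s_k}(x,y_k)\cdots p_{s_2-s_1}(y_2,y_1)J_0(s_1,y_1)$ produces the permutation-sum structure in \eqref{eq:fk} with the correct $1/k!$ combinatorial factor, and controlling the remainder $R_N$ in the Picard iteration in $L^2(\Omega)$. Both reduce to careful applications of Fubini on the time simplex combined with the Itô isometry, and do not require any input beyond Definition \ref{def of Ito sol} and the general theory of multiple Wiener integrals.
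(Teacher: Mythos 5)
Your argument is the standard Picard-iteration/chaos-matching equivalence, which is precisely what the paper relies on: it gives no proof of Proposition \ref{uni-SHE}, deferring to the ``standard iteration procedure'' borrowed from \cite{Hu2007StochasticHE,Hu2014StochasticHE}, and your sketch is a correct reconstruction of that argument. The one step worth making explicit is why $R_N\to 0$ in $L^2(\Omega)$: this is not obtained from a direct It\^o-isometry estimate (no a priori moment bound is available for that), but from the observation that $R_N$ lies in chaoses of order at least $N+1$ and is therefore the tail of the convergent chaos expansion of $u(t,x)\in L^2(\Omega)$; equivalently, projecting the mild equation onto each fixed Wiener chaos yields the kernel recursion directly and bypasses the remainder altogether.
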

\noindent
The remainder of the section is devoted to proving that \eqref{finite chaos} holds true for all $\alpha>\frac{n-2}{4}$, which will lead to existence and uniqueness for \eqref{eq:PAM-mild form}. 

\subsection{Chaos and Solution Second Moment Upper Bounds}
\begin{lemma}\label{Analogue of lemma 3.9}
    Suppose $u_0\in L^\infty(M)\cap L^r(M)$, $1\leq r\leq +\infty$.
    Let $b=b(r)=\frac{(n-1)^2}{2\max(2,r)}K_1$ (it is 0 if $r=+\infty$). There exist three functions $F_1,F_2,F_3:\R_+\to\R_+$ such that for every $t>0$, $x\in M$, the following holds for $\alpha>\frac{n-2}{4}$ and $\rho >0$,
    \begin{enumerate}[label=(\roman*)]
        \item If $\frac{n-2}{4}<\alpha<\frac{n}{4}$, $$\norm{f_k(\cdot,t,x)}_{\mathcal{H}^{\otimes k}}^2\leq C^k e^{-2bt}\frac{\beta^{2k}}{k!}\norm{u_0}_{\infty}^2e^{\rho t}F_1(\rho)^k.$$
        \item If $\alpha=\frac{n}{4}$, $$\norm{f_k(\cdot,t,x)}_{\mathcal{H}^{\otimes k}}^2\leq C^k e^{-2bt}\frac{\beta^{2k}}{k!}\norm{u_0}_{\infty}^2e^{\rho t}F_2(\rho)^k.$$
        \item If $\alpha>\frac{n}{4}$, $$\norm{f_k(\cdot,t,x)}_{\mathcal{H}^{\otimes k}}^2\leq C^k e^{-2bt}\frac{\beta^{2k}}{k!}\norm{u_0}_{\infty}^2e^{\rho t}F_3(\rho)^k.$$
    \end{enumerate}
    Here $C>0$ depends on $\alpha,K_1,n,$ and $r$. 
    The functions $F_i:\R_+\to \R_+$ are differentiable and satisfy the following properties:
    \begin{enumerate}[label=(\alph*)]
        \item For $i\in \set{1,2,3}$, and $K_1>0$ we have  
        \[ \dv{F_i}{\rho}<0,\quad\sup_{\rho\in\R_+}F_i(\rho)=F_i(0)<\infty,\text{ and }\lim_{\rho\uparrow+\infty}F_i(\rho)=0.\]
        
        Note that this implies $F_i:\R_+\to (0,F_i(0)]$ is invertible.
        \item For $K_1>0$, we have the asymptotics \begin{equation}\label{F inf rho asymp}
        F_i(\rho)\sim_{\rho\uparrow+\infty} \begin{cases}
            \left(\frac{1}{\rho}\right)^{2\alpha-\frac{n}{2}+1}& i=1,\\
                \frac{(\ln \rho)^2}{\rho}& i=2,\\
                \frac{1}{\rho}& i=3.
            \end{cases}
    \end{equation}
    \end{enumerate}
\end{lemma}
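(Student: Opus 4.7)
The plan is to reduce the chaos norm $\|f_k(\cdot,t,x)\|_{\mathcal H^{\otimes k}}^2$ to a time-simplex integral whose integrand factorizes, via an iterated Cauchy--Schwarz argument in the Sobolev space $\mathcal{W}^{-\alpha}$, into single-step factors $a(\tau_i):=\sup_{x\in M}\|(-\Delta)^{-\alpha}p_{\tau_i}(x,\cdot)\|_{L^2}^2$, and then to extract the claimed $e^{\rho t}F_i(\rho)^k$ form by a Laplace / indicator trick.

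First, by the symmetry of $f_k$ in its $k$ time-space arguments,
\[
\|f_k(\cdot,t,x)\|_{\mathcal H^{\otimes k}}^2=\frac{\beta^{2k}}{k!}\int_{0<s_1<\cdots<s_k<t}Q(s)\,ds,
\]
where $Q(s)=\int_{M^{2k}}\psi(y)\psi(z)\prod_{i=1}^kG_{2\alpha}(y_i,z_i)\,dy\,dz$ and $\psi(y)=p_{t-s_k}(x,y_k)\cdots p_{s_2-s_1}(y_2,y_1)P_{s_1}u_0(y_1)$. Applying Proposition \ref{Heat Kernel contraction and spectral gap} pointwise yields $|P_{s_1}u_0(y_1)|\le Ce^{-b s_1}\|u_0\|_\infty$, extracting a factor $C^2 e^{-2b s_1}\|u_0\|_\infty^2$ from $Q(s)$ and leaving the $u_0$-free analogue $\tilde Q(s)$.

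The core step is integrating out the spatial variables pair by pair. For $i=1,2,\ldots,k$ in order, the reproducing-kernel identity $\int\!\int f(y)g(z)G_{2\alpha}(y,z)\,dy\,dz=\langle(-\Delta)^{-\alpha}f,(-\Delta)^{-\alpha}g\rangle_{L^2}$, together with the commutation $(-\Delta)^{-\alpha}P_\tau=P_\tau(-\Delta)^{-\alpha}$ and Cauchy--Schwarz, gives
\[
\int_{M^2}p_{\tau_i}(y_{i+1},y_i)p_{\tau_i}(z_{i+1},z_i)G_{2\alpha}(y_i,z_i)\,dy_i\,dz_i\le a(\tau_i),
\]
uniformly in the surviving variables $y_{i+1},z_{i+1}$, with $\tau_i=s_{i+1}-s_i$ for $i<k$ and $\tau_k=t-s_k$. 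Iterating through $i=1,\ldots,k$ collapses the spatial integral to $\tilde Q(s)\le\prod_{i=1}^k a(\tau_i)$. Inserting the estimate $a(\tau)\le C^2 e^{-\eta\tau}\tilde r(\tau)^2$ from Lemma \ref{analogue of lemma 2.22} (where $\eta=\frac{(n-1)^2}{2}K_1\ge 2b$ and $\tilde r^2$ is the case-dependent polynomial factor) and combining exponentials via $-\eta(t-s_1)-2bs_1\le-2bt$ (which uses $\eta\ge 2b$ and $s_1\le t$) yields
\[
e^{-2bs_1}\prod_{i=1}^k a(\tau_i)\le C^{2k}e^{-2bt}\prod_{i=1}^k\tilde r(\tau_i)^2.
\]

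Finally, I would change variables to $(\tau_1,\ldots,\tau_k)$: the unimodular substitution swapping $\tau_0=s_1$ for $\tau_k$ under $\sum_{i=0}^k\tau_i=t$ turns the simplex integral into $\int_{\tau_i>0,\,\sum_{i=1}^k\tau_i<t}\prod_i\tilde r(\tau_i)^2\,d\tau$. Bounding $\mathbf{1}\{\sum<t\}\le e^{\rho(t-\sum)}$ for any $\rho>0$ then factorizes this to $e^{\rho t}F_i(\rho)^k$ with $F_i(\rho):=\int_0^\infty e^{-\rho\tau}\tilde r(\tau)^2\,d\tau$. The monotonicity, finiteness of $F_i(0)$ (using the Dalang-type integrability $2\alpha-n/2>-1$ at $\tau=0$ together with the $(1+K_1\tau)^{-3/2}$ decay at infinity), vanishing of $F_i(\infty)$ by dominated convergence, and the stated large-$\rho$ asymptotics (governed by the three prototype small-$\tau$ behaviors of $\tilde r^2$: $\tau^{2\alpha-n/2}$, $(\ln\tau)^2$, or bounded) are all standard Laplace-transform facts. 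I expect the main obstacle to be the iterative spatial collapse: one must check that the pair-by-pair bound is genuinely \emph{uniform} in the yet-unintegrated variables, which is precisely what allows $a(\tau_i)$ to be pulled out as a scalar at each stage so that the induction closes cleanly.
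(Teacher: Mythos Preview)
Your proposal is correct and follows essentially the same route as the paper. The only difference is organizational: the paper packages the spatial collapse as a recursion $N_{k+1}(t)\le\int_0^t\Psi(t-s)N_k(s)\,ds$ with $\Psi(t)=e^{2bt}\sup_x\|(-\Delta)^{-\alpha}p_t(x,\cdot)\|_{L^2}^2$ and then applies the renewal-type device $\mathcal N_k(\rho)=\sup_{t\ge0}e^{-\rho t}N_k(t)$ (borrowed from \cite{Khoshnevisan2013NonlinearNE}) to obtain $\mathcal N_k(\rho)\le\hat\Psi(\rho)^k$, whereas you pass directly to the increments $\tau_i$ and use the indicator bound $\mathbf 1\{\sum\tau_i<t\}\le e^{\rho(t-\sum\tau_i)}$ to factorize; unrolling the paper's recursion yields exactly your product $\prod_i a(\tau_i)$, and the two Laplace-type tricks give the identical bound $e^{\rho t}\hat\Psi(\rho)^k$, after which the estimates on $\hat\Psi$ (your $F_i$) via Lemma~\ref{analogue of lemma 2.22} and the large-$\rho$ asymptotics coincide verbatim.
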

\begin{proof} 
    We follow the proof of Lemma 3.9 of \cite{baudoin2024parabolic}.\\
    \noindent{\it Step 1: Reduction to an integral recursion.} By symmetry, in order to get the desired estimates, we only need to evaluate the $L^2$-norm of $f_k(\cdot,t,x)$ on a particular time simplex $[0,t]^k_<:=\{(s_1,\dots, s_k): 0<s_1<\cdots<s_k<t\}$. For $(s_1,\dots, s_k)\in [0,t]^k_<$ we introduce the following notation:
    \begin{align}\label{def:g_k}
        g_k(s,y,t,x)=p_{t-s_k}(x,y_k)\cdots p_{s_2-s_1}(y_2, y_1),
    \end{align}
    where $y=(y_1, \dots, y_k)$ and $s=(s_1, \dots, s_k)$.
    By comparing \eqref{def:g_k} and \eqref{eq:fk} it is clear that on the simplex $[0,t]^k_<$ we have
    \begin{align}\label{f_k H-norm bounds}
        f_k(s,y,t,x)=\frac{\beta^k}{k!}g_k(s,y,t,x)P_{s_1}u_0(y_1).
    \end{align}
    Hence owing to Proposition \ref{Heat Kernel contraction and spectral gap} (i), (ii), or (iii) (for $u_0\in L^\infty(M)\cap L^r(M)$, $r=+\infty$, $r\in [2,\infty)$ or $r\in [1,2]$, respectively) and taking into account all possible orderings of $s_1,\dots,s_k$  we get
    \begin{equation}\label{f0}
        \|f_k(\cdot, t,x)\|_{\mathcal H^{\otimes k}}^2 \le C \frac{\beta^{2k}}{k!} M_k(t,x)\|u_0\|_{\infty}^2, 
    \end{equation}
    where $M_k(t,x)$ is given by 
    \begin{equation}\label{lemma 3.8 ms 2}
        M_k(t, x)=
        \int_{[0,t]^k_<}\int_{M^{2k}}g_k(s,y,t,x)\prod_{i=1}^k G_{2\alpha}(y_i,y_i')g_k(s,y',t,x) 
        e^{-2b s_1} dydy'ds.
    \end{equation}
    We now upper bound $M_k$ defined by \eqref{lemma 3.8 ms 2} thanks to a recursive procedure. To this aim, taking the expression~\eqref{def:g_k} of $g_k$ into account, observe that one can write \eqref{lemma 3.8 ms 2} as  
    \begin{multline}\label{eq-Mk+1}
        e^{2b t}M_{k+1}(t, x)=\int_0^t e^{2b (t-s_{k+1})} 
        \int_{M^{2}}  p_{t-s_{k+1}}(x,y_{k+1})\\
        \cdot G_{2\alpha}^D(y_{k+1},y'_{k+1}) \, p_{t-s_{k+1}}(x,y'_{k+1})\, e^{2b s_{k+1}}M_k (s_{k+1},y_{k+1}) d\mu(y_{k+1})d\mu(y'_{k+1})\,         ds_{k+1},
    \end{multline}
    where we start the recursion with
    $M_0(t,x):=e^{-2b t}$.
    In the identity above, we shall bound the term $e^{2b s_{k+1}}M_k (s_{k+1},y_{k+1})$ by $N_{k}(s_{k+1})$, where we define
    \begin{equation}\label{eq-Nk}
        N_k(t)=e^{2b t} \sup_{x\in M}M_k(t, x).
    \end{equation}
    Hence resorting to Definition \ref{kernel for frac Laplacian}, we end up with the following relation:
    \begin{equation}\label{recursive1}
        N_{k+1}(t)\le \int_0^t \Psi(t-s) N_{k}(s)ds,
        \quad\text{with}\quad
        \Psi(t)=e^{2b t} \sup_{x\in M}\|(-\Delta)^{-\alpha}P_t(x,\cdot)\|_{L^2(M)}^2.
    \end{equation}
    Notice that one can initiate the recursion \eqref{recursive1} by observing that $N_0(t)=1$.\\
    \noindent{\it Step 2: A renewal procedure.} To study the renewal inequality \eqref{recursive1}  and get the estimates we want to prove, we now appeal to an idea we learned from the proof of Theorem 1.1 in \cite{Khoshnevisan2013NonlinearNE} (see also \cite{Foondun2008IntermittenceAN} for a related circle of ideas). Namely for our additional parameter $\rho >0$ let us denote
    \begin{equation}\label{f1}
        \mathcal{N}_k(\rho)=\sup_{t \ge 0} e^{-\rho t}N_k(t).
    \end{equation}
    From \eqref{recursive1} we infer that
    \[
        e^{-\rho t} N_{k+1}(t)\le \int_0^t e^{-\rho (t-s)}\Psi(t-s) e^{-\rho s} N_{k}(s)ds 
        \le 
        \left( \int_0^t e^{-\rho (t-s)}\Psi(t-s) ds \right) \mathcal{N}_k(\rho).
    \]
    Therefore we have
    \begin{equation}\label{f2}
        \mathcal{N}_{k+1}(\rho) \le \hat{\Psi} (\rho) \, \mathcal{N}_k(\rho) ,
        \quad\text{with}\quad
        \hat{\Psi} (\rho)=\int_0^{+\infty} e^{-\rho s} \Psi (s) ds .
    \end{equation}
    Since $\mathcal{N}_{0}(\rho)=1$ we thus conclude by induction that $\mathcal{N}_k(\rho) \le \hat{\Psi} (\rho)^k$.
    Hence going back to the Definition~\eqref{f1}, for every $t \ge 0$ we obtain
    \begin{align}\label{eq-Nk-est}
     N_k(t)\le \hat{\Psi} (\rho)^k  e^{\rho t} .   
    \end{align}
    Having relation \eqref{eq-Nk} in mind, our bound on $M_k(t)$ is now reduced to an estimate on $\hat{\Psi}$ defined by~\eqref{f2}. We now upper bound the function $\hat{\Psi}$ according to the values of $\alpha$.
    
    Plugging in Lemma \ref{analogue of lemma 2.22} with $C=C_{\alpha,n}(K_1)$ gives us $$\Psi(t)\leq \begin{cases}
        C^2 ( 1_{[0,1/2K_1]}(t)t^{2\alpha-\frac{n}{2}} +1_{[1/2K_1,+\infty)}(t)(1+K_1t)^{-3/2} )& \alpha <\frac{n}{4}\\
        C^2( 1_{[0,1/2K_1]}(t)|\ln (t)|^2 +1_{[1/2K_1,+\infty)}(t)(1+K_1t)^{-3/2} )& \alpha =\frac{n}{4}\\
        C^2(1_{[0,1/2K_1]}(t) +1_{[1/2K_1,+\infty)}(t)(1+K_1t)^{-3/2} )& \alpha >\frac{n}{4}.
    \end{cases}$$
    Plugging in the estimates for $\Psi(t)$ above and using the substitution $u=\rho s$ as well as the integral decomposition $\int_0^{+\infty}=\int_0^{\frac{1}{2K_1}}+\int_{\frac{1}{2K_1}}^{+\infty}$, we have 
    $$\hat{\Psi}(\rho)\leq \begin{cases}
        C (I_1(\rho)+I_4(\rho)),&0<\alpha <\frac{n}{4},\\
        C(I_2(\rho)+I_4(\rho)) &\alpha=\frac{n}{4},\\
        C(I_3(\rho)+I_4(\rho)) &\alpha>\frac{n}{4},
    \end{cases}$$
    where \begin{align*}
        &I_1(\rho):=\int_0^{\frac{1}{2K}}s^{2\alpha-\frac{n}{2}}e^{-\rho s}ds, 
        &I_2(\rho):=\int_0^{\frac{1}{2K}}e^{-\rho s}\ln(s)^2ds,\\
        &I_3(\rho):=\int_0^{\frac{1}{2K}}e^{-\rho s}ds, 
        &I_4(\rho):= \int_{\frac{1}{2K}}^{+\infty}(1+Ks)^{-\frac{3}{2}}e^{-\rho s}ds.
    \end{align*}
    An easy calculus exercise gives $$I_3(\rho)=\frac{1-e^{-\frac{\rho}{2K}}}{\rho},$$ and the substitution $u=\rho s$ gives $$I_1(\rho)=\left(\frac{1}{\rho}\right)^{2\alpha-\frac{n}{2}+1}\gamma\left(2\alpha-\frac{n}{2}+1,\frac{\rho}{2K}\right).$$
    We immediately see that $\alpha\in (0,\frac{n-2}{4}]$ implies $2\alpha-\frac{n}{2}\leq -1$. Thus, $I_1$ only converges if $\alpha> \frac{n-2}{4}$.\\
    We then see that estimates (i) to (iii) are satisfied with 
    \begin{align*}
        F_1(\rho,K):=&I_1(\rho)+I_4(\rho),\\
        F_2(\rho,K):=&I_2(\rho)+I_4(\rho),\\
        F_3(\rho,K):=&I_3(\rho)+I_4(\rho).
    \end{align*}
    We now prove the properties of each $F_i$. For the derivative, $\dv{F_i}{\rho}<0$ follows from $\pdv{\rho}e^{-\rho s}<0$, and the $\rho\uparrow+\infty$ limit follows from a dominated convergence argument with $\lim_{\rho\downarrow 0}e^{-\rho s}=0,s>0$. This implies $\sup_{\rho\in \R_+}F_i(\rho)=F_i(0)$.
    The fact that all $I_j,j\in\set{1,2,3,4}$ are convergent for $\rho=0$ if $K>0$ gives all $F_i(0)<+\infty$ for $K>0$.\\ 
    We now move on to property (b). The $F_1$ and $F_3$ large $\rho$ asymptotics are justified by the asymptotic behaviors of the lower incomplete gamma function in the second argument and $e^{-\frac{\rho}{2K_1}},$ in $\rho$, respectively. The $F_2$ asymptotic is slightly harder. We begin with $$I_2(\rho)\leq \int_0^{+\infty} e^{-\rho s}\ln(s)^2ds$$
    and we will compute the right hand side integral, since it is not trivial. First, observe that differentiation under the integral sign gives $$\dv[n]{a}\int_0^{+\infty} e^{-\rho s}s^ads= \int_0^{+\infty} e^{-\rho s} s^a\ln(s)^nds.$$
    We see that the integral we are interested in is the above with $a=0,n=2$. Now we use the formula $$\int_0^{+\infty} e^{-\rho s}s^ads=\frac{\Gamma(a+1)}{\rho^{a+1}},$$
    which can be shown with the substitution $\rho s=u$. It then follows that $$\int_0^{+\infty} e^{-\rho s}\ln(s)^2ds=\dv[2]{a}\eval_{a=0}\frac{\Gamma(a+1)}{\rho^{a+1}}.$$
    Taking the second derivative twice gives $$\dv[2]{a}\eval_{a=0}\frac{\Gamma(a+1)}{\rho^{a+1}}=\frac{\Gamma''(1)-2\Gamma'(1)\ln(\rho)+\Gamma(1)\ln(\rho)^2}{\rho}.$$
    The identities $\Gamma(1)=1,\Gamma'(1)=-\gamma$, and $\Gamma''(1)=\gamma^2+\frac{\pi^2}{6}$,
    where $\gamma$ is the Euler–Mascheroni constant, gives the most exact representation of our integral of interest possible. This justifies the $F_2$ asymptotic and finishes the proof.
\end{proof}
\begin{remark}\label{Dalang's Condition}
    From the representation of $I_1$ above, we see that $\alpha>\frac{n-2}{4}$ is required for the chaos expansion to converge. This is our version of Dalang's condition.
\end{remark}

\begin{theorem}\label{analogue of theorem 3.10}
    Let $M$ be a Cartan-Hadamard Manifold satisfying $\sec_M\leq -K_1<0$. A mild solution to the Cauchy problem of PAM driven by noise $W_\alpha$ exists and is unique if the initial condition $u_0\in L^\infty(M)\cap L^r(M)$, $1\leq r\leq +\infty$ and $\alpha>\frac{n-2}{4}$. Moreover, with the convention $\frac{1}{+\infty}=0$, its second moment satisfies 
    \begin{equation}\label{2nd Lyapnov Exp UB}
        \limsup_{t\to+\infty}\frac{\ln\E[u(t,x)^2]}{t}\leq \Theta_{\alpha}(\beta)-\frac{(n-1)^2}{\max(2,r)}K_1,
    \end{equation}
where $\Theta_{\alpha}(\beta)$ is a non-negative and (not strictly) increasing function satisfying the following properties.
\begin{enumerate}
    \item Define $$i=i(\alpha):=\begin{cases}
        1& \frac{n-2}{4}<\alpha<\frac{n}{4},\\
        2& \alpha=\frac{n}{4},\\
        3& \alpha>\frac{n}{4}.
    \end{cases}$$ Let $F_i:\R_+\to \R_+$ for $i\in\set{1,2,3}$ be the functions defined in Lemma \ref{Analogue of lemma 3.9}. Then, for $\beta<\frac{1}{\sqrt{CF_i(0)}}$,  $\Theta_{\alpha}\equiv0$. Here $C>0$ is the constant taken from Lemma \ref{Analogue of lemma 3.9}.
    \item The following asymptotic relations hold: \begin{enumerate}[label=(\roman*)]
        \item if $\alpha\in \left(\frac{n-2}{4},\frac{n}{4}\right)$, $\Theta_{\alpha}(\beta)\sim_{\beta\to\infty}(C\beta^2)^{1-2\alpha+\frac{n}{2}}$,
        \item if $\alpha=\frac{n}{4}$, then $\Theta_{\alpha}(\beta)\sim_{\beta\to+\infty}\frac{C\beta^2}{\ln([C\beta]^2)^2}$,\\
        \item if $\alpha>\frac{n}{4}$, then $\Theta_{\alpha}(\beta)\sim_{\beta\to+\infty}C\beta^2$.
    \end{enumerate}
\end{enumerate}
\end{theorem}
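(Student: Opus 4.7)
The plan is to prove existence and uniqueness via Proposition \ref{uni-SHE} and to extract the exponential bound from the same estimate in one stroke. By that proposition, it suffices to verify the summability \eqref{finite chaos} and, along the way, obtain a quantitative upper bound on $\sum_{k\ge 1}k!\,\|f_k(\cdot,t,x)\|^2_{\mathcal H^{\otimes k}}$. The crucial input is Lemma \ref{Analogue of lemma 3.9}, valid precisely when $\alpha>\tfrac{n-2}{4}$ (this is where the Dalang condition enters), which gives, for every $\rho\ge 0$,
\[
k!\,\|f_k(\cdot,t,x)\|^2_{\mathcal H^{\otimes k}}\;\leq\; \|u_0\|_\infty^2\, e^{(\rho-2b)t}\,\bigl(C\beta^2 F_{i(\alpha)}(\rho)\bigr)^k,
\]
with $2b=\tfrac{(n-1)^2}{\max(2,r)}K_1$. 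Summing in $k$ is a geometric series, convergent precisely when $C\beta^2 F_{i(\alpha)}(\rho)<1$, and in that case
\[
\E[u(t,x)^2]\;\leq\;\frac{\|u_0\|_\infty^2\,e^{(\rho-2b)t}}{1-C\beta^2 F_{i(\alpha)}(\rho)}.
\]

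For the small-$\beta$ regime, Lemma \ref{Analogue of lemma 3.9}(a) says that $F_{i(\alpha)}$ attains its maximum $F_{i(\alpha)}(0)$ at $\rho=0$, so whenever $\beta<1/\sqrt{CF_{i(\alpha)}(0)}$ we may take $\rho=0$, obtaining a finite constant times $e^{-2bt}$ as an upper bound. This handles the summability required by Proposition \ref{uni-SHE} and gives $\limsup_{t\to\infty}\tfrac{1}{t}\ln\E[u(t,x)^2]\le -2b$; accordingly we set $\Theta_\alpha\equiv 0$ on this range, matching item 1. For $\beta^2\ge 1/(CF_{i(\alpha)}(0))$, the strict monotonicity and continuity of $F_{i(\alpha)}:[0,\infty)\to(0,F_{i(\alpha)}(0)]$ from Lemma \ref{Analogue of lemma 3.9}(a) produce a unique $\rho_*\ge 0$ with $F_{i(\alpha)}(\rho_*)=1/(C\beta^2)$, and we set
\[
\Theta_\alpha(\beta)\;=\;\rho_*\;=\;F_{i(\alpha)}^{-1}\bigl(1/(C\beta^2)\bigr).
\]
For any $\rho>\rho_*$ the geometric series converges and gives $\limsup_t\tfrac{1}{t}\ln\E[u(t,x)^2]\le\rho-2b$; sending $\rho\downarrow\rho_*$ yields \eqref{2nd Lyapnov Exp UB}. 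That $\Theta_\alpha$ is non-decreasing in $\beta$ follows because $F_{i(\alpha)}^{-1}$ is decreasing and $1/(C\beta^2)$ is decreasing in $\beta$.

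The large-$\beta$ asymptotics of item 2 then come by inverting the three tail profiles in Lemma \ref{Analogue of lemma 3.9}(b) evaluated at $1/(C\beta^2)\downarrow 0$, giving cases (i)--(iii); in the critical case $\alpha=n/4$ the logarithmic factor in $F_2(\rho)\sim(\ln\rho)^2/\rho$ requires the extra care that produces the $\ln$-correction. The main technical obstacle has already been absorbed into Lemma \ref{Analogue of lemma 3.9}, whose $F_{i(\alpha)}$ combines the Davies--Mandouvalos heat kernel upper bound \eqref{DMUB} with the spectral-gap contraction from Proposition \ref{Heat Kernel contraction and spectral gap}; the remainder of the argument here is the renewal-type bookkeeping outlined above.
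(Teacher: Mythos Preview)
Your proposal is correct and follows essentially the same route as the paper: invoke Proposition~\ref{uni-SHE}, feed in the chaos estimates of Lemma~\ref{Analogue of lemma 3.9}, sum the resulting geometric series, define $\Theta_\alpha(\beta)$ as $F_{i(\alpha)}^{-1}(1/(C\beta^2))$ when $\beta$ is large and zero otherwise, and then read off the asymptotics by inverting the tail behaviors of the $F_i$. One small expository point: your displayed bound $\E[u(t,x)^2]\le \|u_0\|_\infty^2 e^{(\rho-2b)t}/(1-C\beta^2 F_{i(\alpha)}(\rho))$ tacitly includes the deterministic $k=0$ term $|P_tu_0(x)|^2$, and for that you need to invoke Proposition~\ref{Heat Kernel contraction and spectral gap} directly (not just through Lemma~\ref{Analogue of lemma 3.9}, which handles $k\ge 1$); the paper does this explicitly, but it only changes the prefactor and does not affect the $\limsup$.
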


\begin{proof}
    For notational simplicity, we write $i=i(\alpha)$.
    We follow the same idea as the proof of Theorem 3.10 in \cite{baudoin2024parabolic}, in which well-posedness holds if $$\sum_{k=1}^{+\infty} k!\norm{f_k(\cdot,t,x)}_{\mathcal{H}^{\otimes k}}<+\infty,$$
    which by Lemma \ref{Analogue of lemma 3.9} is true if $\sum_{k=1}^{+\infty}\{C\beta^2 F_i(\rho)\}^k$ converges as a geometric series for some $\rho$.\\
    Also by Lemma \ref{Analogue of lemma 3.9}, this convergence holds for $\rho=0$ if $C\beta^2<\frac{1}{F_i(0)}$ and for $\rho>F_i^{-1}(\frac{1}{C\beta^2})$ if $C\beta^2\geq \frac{1}{F_i(0)}$ (recall that Lemma \ref{Analogue of lemma 3.9} also tells us all $F_i$ are invertible). We thus define for each $i$ a non-negative, continuous, and (not strictly) increasing function $\Theta_{\alpha}(\beta)$ given by $$\Theta_{\alpha}(\beta):=\begin{cases}
        0,& \beta<\frac{1}{CF_i(0)},\\F_i^{-1}(\frac{1}{C\beta^2}),& \beta\geq\frac{1}{CF_i(0)},
    \end{cases}$$
    The convergence condition for $\rho$ can thus be stated as $\rho>\Theta_{\alpha}(\beta)$. This gives us \eqref{eq:PAM-mild form} is well-posed for every $\beta$ and all $\alpha>\frac{n-2}{4}$, since $\Theta_{\alpha}(\beta)<+\infty$ for all $\beta$. We claim that the second order Lyapunov exponent satisfies \eqref{2nd Lyapnov Exp UB} with this $\Theta_{\alpha}$. Before we prove this claim, we note that property 1. of $\Theta_{\alpha}$ is a direct consequence of the definition, and property 2. follows from property (b) of each $F_i$ stated in Lemma \ref{Analogue of lemma 3.9}. \\
    We will now prove \eqref{2nd Lyapnov Exp UB} with $\Theta_{\alpha}$ defined above. It\^{o} isometry tells us the second moment of the solution is given by $$\E[u(t,x)^2]=P_tu_0(x)^2+\sum_{k=1}^{+\infty} k!\norm{f_k(\cdot,t,x)}_{\mathcal{H}^{\otimes k}}.$$
     Applying Lemma \ref{Analogue of lemma 3.9} to the series, and item (i), (ii), or (iii) (for $r=+\infty$, $2\leq r<+\infty$, or $1\leq r\leq 2$, respectively) of proposition \ref{Heat Kernel contraction and spectral gap} to $\abs{P_tu_0(x)}$ gives for every $\rho>\Theta_{\alpha}(\beta)$, $t\geq 1$,
    \begin{align*}
        \E[u(t,x)^2]&\leq \abs{P_tu_0(x)}^2+\sum_{k=1}^{+\infty} \norm{u_0}_\infty^2 e^{(\rho-2b)t}\left(C\beta^2F_i(\rho)\right)^k\\
        &\leq C'\norm{u_0}_\infty^2e^{(\rho-2b) t}\sum_{k=0}^{+\infty}\left(C\beta^2F_i(\rho)\right)^k,
    \end{align*}
    where $b=\frac{(n-1)^2}{2\max(2,r)}K_1$ and $C'>0$ is from Lemma \ref{Heat Kernel contraction and spectral gap}.
    The $\inf$ of all such $\rho$ is $\Theta_{\alpha}(\beta)$, so
    $$\limsup_{t\uparrow+\infty}\frac{\ln \E[u(t,x)^2]}{t}\leq \Theta_{\alpha}(\beta)-2b.$$
    This finishes the proof.
\end{proof}
\subsection{$P$-th Moment Upper Bound of the Solution}
We now use hypercontractivity to get the $p$-th Lyapunov exponent upper bound.
\begin{theorem}\label{pth moment}
    Let $p\geq 2$, $\Theta_{\alpha}(\beta)$ be as before. Then we have for $u_0\in L^\infty(M)\cap L^r(M)$ $$\limsup_{t\to+\infty}\frac{\ln\E[u(t,x)^p]}{t}\leq \frac{p}{2}\left(\Theta_{\alpha}(\sqrt{p-1}\beta)-\frac{(n-1)^2}{\max(2,r)}K_1\right).$$
\end{theorem}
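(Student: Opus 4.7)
The plan is to combine the chaos expansion \eqref{eq:chaos expansion}--\eqref{eq:fk} with Nelson's hypercontractivity inequality and exploit the homogeneity of the chaos kernels $f_k$ in $\beta$. This reduces the $p$-th moment estimate to the $L^2$ moment estimate already delivered by Theorem \ref{analogue of theorem 3.10}, applied at the rescaled inverse temperature $\tilde\beta := \sqrt{p-1}\,\beta$.

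First I would record the hypercontractivity bound: for any $F = \sum_{k=0}^\infty I_k(f_k) \in L^2(\Omega)$ and any $p \ge 2$,
\[
\E[F^p]^{2/p} \leq \sum_{k=0}^\infty (p-1)^k\, k!\,\|f_k\|_{\mathcal H^{\otimes k}}^2,
\]
whenever the right-hand side is finite. This follows from Nelson's inequality $\|T_s G\|_p \leq \|G\|_2$ for the Ornstein--Uhlenbeck semigroup with $s = \tfrac{1}{2}\log(p-1)$, applied to the auxiliary series $G := \sum_{k} (p-1)^{k/2} I_k(f_k)$, since $T_s G = F$.

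Next I would apply this bound to $u_\beta(t,x)$. A direct inspection of the explicit formula \eqref{eq:fk} shows that the chaos kernel of $u_\beta(t,x)$ factors as $f_k^{(\beta)} = \beta^k h_k$ with $h_k$ independent of $\beta$, so
\[
(p-1)^k k!\,\|f_k^{(\beta)}\|_{\mathcal H^{\otimes k}}^2 = k!\,\|f_k^{(\tilde\beta)}\|_{\mathcal H^{\otimes k}}^2.
\]
Summing over $k$ and using the identity $\E[u_{\tilde\beta}(t,x)^2] = \sum_k k!\,\|f_k^{(\tilde\beta)}\|_{\mathcal H^{\otimes k}}^2$ (with the $k=0$ term contributing $(P_t u_0(x))^2$), the hypercontractivity bound collapses to
\[
\E\bigl[u_\beta(t,x)^p\bigr]^{2/p} \leq \E\bigl[u_{\tilde\beta}(t,x)^2\bigr].
\]
Taking logarithms, dividing by $t$, passing to the $\limsup$, and invoking Theorem \ref{analogue of theorem 3.10} for $u_{\tilde\beta}$ at inverse temperature $\tilde\beta$ yields
\[
\limsup_{t\to+\infty}\frac{\ln\E[u(t,x)^p]}{t} \leq \frac{p}{2}\left(\Theta_\alpha(\sqrt{p-1}\,\beta) - \frac{(n-1)^2}{\max(2,r)}K_1\right),
\]
which is exactly the announced inequality.

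The only delicate point is applying the hypercontractivity inequality to the full infinite chaos series rather than to a finite-dimensional $F$. I would handle this by working with the truncations $u^N := \sum_{k=0}^N I_k(f_k^{(\beta)})$, applying hypercontractivity at each truncation level, and then letting $N\to\infty$: the right-hand side is finite since Theorem \ref{analogue of theorem 3.10} guarantees that $u_{\tilde\beta}$ is a well-defined $L^2$ solution for every $\tilde\beta > 0$, and the resulting uniform bound on $\|u^N\|_p$ together with $L^2$ convergence $u^N \to u_\beta(t,x)$ gives convergence in $L^p$. Everything else is routine.
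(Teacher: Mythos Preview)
Your proof is correct and follows essentially the same route as the paper: hypercontractivity on the chaos expansion combined with the observation that the factor $(p-1)^{k/2}$ is absorbed by the replacement $\beta\mapsto\sqrt{p-1}\,\beta$. The only cosmetic difference is that the paper applies the chaos-by-chaos bound \eqref{eq-C} together with the triangle inequality in $L^p$ and then re-invokes Lemma~\ref{Analogue of lemma 3.9}, whereas you use the global Nelson inequality to obtain the cleaner intermediate bound $\E[u_\beta(t,x)^p]^{2/p}\le \E[u_{\tilde\beta}(t,x)^2]$ and then quote Theorem~\ref{analogue of theorem 3.10} directly.
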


\begin{proof}
    Recall hypercontractivity in each Wiener chaos \cite[Theorem 1.4.1]{Nualart1995TheMC}, which states \begin{align}\label{eq-C}
        \mathbf E \left( I_k(f_k(\cdot, t,x))^p \right)^{1/p} \le (p-1)^{k/2}\mathbf E \left( I_k(f_k(\cdot, t,x))^2 \right)^{1/2}.
    \end{align}
    Applying this to the chaos expansion in conjunction with Lemma \ref{Analogue of lemma 3.9} gives
    $$\E[\abs{u(t,x)}^p]^{\frac{1}{p}}\leq \abs{P_tu_0(x)}+\sum_{k=1}^{+\infty} [C_{\alpha,n}(K_1)\beta^2(p-1)F_i(\rho)]^{\frac{k}{2}}\norm{u_0}_\infty e^{(\frac{\rho}{2}-\frac{(n-1)^2}{2\max(2,r)}K_1)t}.$$
    Then applying proposition \ref{Heat Kernel contraction and spectral gap} to $\abs{P_tu_0}$ the same way as done in the proof of Theorem \ref{analogue of theorem 3.10} finishes the proof.
\end{proof}

\begin{corollary}\label{upper bound phase transition}
    For $1\leq r\leq +\infty$, let $\beta^c>0$ be the largest positive number satisfying $\Theta_{\alpha}(\beta^c)=\frac{(n-1)^2}{\max(2,r)}K$. If $\sec_M\leq -K<0$ and $u$ solves \eqref{eq:PAM-mild form} with $\beta>0$ satisfying $\sqrt{p-1}\beta<\beta^c$ and $u_0\in L^\infty(M)\cap L^r(M)$, then the $p-$th Lyapunov exponent is $0$ if $r=+\infty$, and is negative if $r<+\infty$.
\end{corollary}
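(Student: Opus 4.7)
The plan is to read this off directly from Theorem \ref{pth moment}, treating the two cases $r=+\infty$ and $r<+\infty$ separately, with the (weak) monotonicity of $\Theta_\alpha$ doing the main work. First I would invoke Theorem \ref{pth moment} to obtain
\[
\limsup_{t\to+\infty}\frac{\ln\E[u(t,x)^p]}{t}\leq \frac{p}{2}\left(\Theta_{\alpha}(\sqrt{p-1}\beta)-\frac{(n-1)^2}{\max(2,r)}K\right),
\]
so the goal is reduced to controlling $\Theta_\alpha(\sqrt{p-1}\beta)$ against $\frac{(n-1)^2}{\max(2,r)}K$ under the hypothesis $\sqrt{p-1}\beta<\beta^c$.

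For the case $r=+\infty$, the term $\frac{(n-1)^2}{\max(2,r)}K$ vanishes (by the convention $\frac{1}{+\infty}=0$), so $\beta^c$ is by definition the largest $\beta$ with $\Theta_\alpha(\beta)=0$; since $\Theta_\alpha$ is non-negative and non-decreasing, $\Theta_\alpha\equiv 0$ on $[0,\beta^c]$, and in particular $\Theta_\alpha(\sqrt{p-1}\beta)=0$. The upper bound above then gives a non-positive $p$-th Lyapunov exponent, which is the content of the $r=+\infty$ part of the statement.

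For $r<+\infty$, $\frac{(n-1)^2}{\max(2,r)}K>0$. Here the point is that $\Theta_\alpha$ is actually strictly increasing on the branch where it is positive: recalling its definition in Theorem \ref{analogue of theorem 3.10},
\[
\Theta_\alpha(\beta)=F_i^{-1}\!\left(\tfrac{1}{C\beta^2}\right)\quad\text{for }\beta\geq\tfrac{1}{\sqrt{CF_i(0)}},
\]
and noting that Lemma \ref{Analogue of lemma 3.9}(a) yields $F_i$ strictly decreasing with a strictly decreasing inverse, we see $\Theta_\alpha$ is strictly increasing wherever it is strictly positive. Since $\Theta_\alpha(\beta^c)=\frac{(n-1)^2}{\max(2,r)}K>0$, the critical value $\beta^c$ lies on this strictly increasing branch, so $\sqrt{p-1}\beta<\beta^c$ forces the strict inequality $\Theta_\alpha(\sqrt{p-1}\beta)<\Theta_\alpha(\beta^c)=\frac{(n-1)^2}{\max(2,r)}K$. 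Plugging this into the upper bound gives a strictly negative $p$-th Lyapunov exponent.

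I expect no real obstacle here: the only point requiring care is verifying that $\Theta_\alpha$ is \emph{strictly} (not merely weakly) increasing on the non-trivial branch, which is why I single out this fact via Lemma \ref{Analogue of lemma 3.9}(a). Without that observation, one would only get the non-strict bound $\Theta_\alpha(\sqrt{p-1}\beta)\le \Theta_\alpha(\beta^c)$, which is enough for the $r=+\infty$ half but not for the strict negativity claimed when $r<+\infty$.
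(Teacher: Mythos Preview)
Your proposal is correct and matches the paper's implicit reasoning: the corollary is stated without proof, being an immediate consequence of Theorem~\ref{pth moment} together with the monotonicity of $\Theta_\alpha$. Your added observation that strict monotonicity on the nontrivial branch (via Lemma~\ref{Analogue of lemma 3.9}(a)) is what guarantees the \emph{strict} negativity for $r<+\infty$ is a worthwhile clarification that the paper leaves unspoken.
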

\begin{remark}\label{UB assumption}
    We note here that the definition of our noise for all $\alpha>0$ and results in this section only used that $M$ is a Cartan-Hadamard manifold which has the heat kernel satisfies \eqref{DMUB}
    for some $C,K_1>0$. Thus, the assumption $\sec_M\leq -K_1<0$ on $M$ can be replaced with the heat kernel satisfying \eqref{DMUB} for some $C,K_1$. One nontrivial class of manifolds covered by replacing this assumption is known as the asymptotically hyperbolic manifolds, whose heat kernel satisfying \eqref{DMUB} with $K_1=1$ is proven in \cite{Chen2016TheHK}.
\end{remark}

\section{Lower Bound for Moments}

We now turn to the lower bound for the moments. Our main tool is the following Feynman-Kac representation of the moments.
\subsection{Feynman-Kac Formula for Moments and Correlation Lower Bound}
\begin{theorem}\label{FK rep}
    Let $p\geq 2$, $\set{B^{x,j}}_{1\leq j\leq p}$ be Brownian motions starting at $x$ independent of $W$ and each other. The following Feynman-Kac formula for the $p$-th moment of $u(t,x)$ holds if $u_0\in L^\infty(M)$: $$\E[\abs{u(t,x)}^p]=\E_x\left[\prod_{j=1}^p u_0(B^{x,j}_t)\exp\left\{\beta^2 \sum_{1\leq i,k \leq p,i\neq k}\int_0^t G_{2\alpha}(B^{x,i}_s,B^{x,k}_s)ds\right\}\right].$$
    Here $\E_x$ is the expectation with respect to the Brownian motions $(B^{x,j})_{1\leq j\leq p}$.
\end{theorem}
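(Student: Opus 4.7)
My plan is a Feynman--Kac argument on an enlarged probability space carrying independent copies of the $p$ Brownian motions and the noise $W$, routed through the chaos expansion \eqref{eq:chaos expansion}--\eqref{eq:fk}. The starting observation is that, on the increasing simplex $\{0<s_1<\cdots<s_k<t\}$, the kernel $f_k$ of \eqref{eq:fk} is, up to the factor $\beta^k/k!$, the mixed space-time density of a Brownian trajectory started at $x$ passing through $y_i$ at time $t-s_i$ and ending at a point weighted by $u_0$; equivalently,
\begin{equation*}
k!\, f_k(s,y,t,x) \;=\; \beta^k\,\E^B_x\!\left[u_0(B_t)\prod_{i=1}^k \delta_{B_{t-s_i}}(y_i)\right].
\end{equation*}
Re-summing the chaos representation against this identity yields the formal Feynman--Kac formula
\begin{equation*}
u(t,x)\;=\;\E^B_x\!\bigl[u_0(B_t)\,\mathcal E(B)\bigr],\qquad \mathcal E(B)\;=\;\sum_{k=0}^{\infty}\frac{\beta^k}{k!}\,I_k\bigl(\delta_{B_{t-\cdot}}^{\otimes k}\bigr),
\end{equation*}
where, conditionally on $B$, $\mathcal E(B)$ is the Wick exponential of the centered Gaussian variable $\beta\int_0^t W(ds,B_{t-s})$ (whose conditional variance $\beta^2\int_0^t G_{2\alpha}(B_s,B_s)\,ds$ is formally infinite, but this is precisely the divergence the Wick normalization removes).

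To make this precise I would mollify the noise spatially to a smooth field $W_\varepsilon$ with covariance $G_{2\alpha}^{\varepsilon}$, invoke the classical Feynman--Kac representation $u_\varepsilon(t,x)=\E^B_x[u_0(B_t)\mathcal E_\varepsilon(B)]$ for the regularized equation, and pass to the limit $\varepsilon\to 0$. Expanding the $p$-th power with independent Brownian copies $B^{x,1},\dots,B^{x,p}$ and interchanging expectations via Fubini gives
\begin{equation*}
\E[u_\varepsilon(t,x)^p] \;=\; \E_x\!\left[\prod_{j=1}^p u_0(B^{x,j}_t)\,\E^W\!\Bigl[\prod_{j=1}^p \mathcal E_\varepsilon(B^{x,j})\Bigr]\right].
\end{equation*}
The classical Gaussian identity $\E[\prod_{j}\exp_W X_j]=\exp\bigl(\sum_{i<j}\cov(X_i,X_j)\bigr)$, applied conditionally on the Brownian paths with $X_j=\beta\int_0^t W(ds,B^{x,j}_{t-s})$ and the change of variables $s\mapsto t-s$, produces the exponent
\begin{equation*}
\beta^2\!\!\sum_{1\leq i<j\leq p}\!\int_0^t G_{2\alpha}(B^{x,i}_s,B^{x,j}_s)\,ds,
\end{equation*}
which after symmetrization over ordered pairs is the quantity in the statement.

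The main technical obstacle is handling the on-diagonal singularity of $G_{2\alpha}$, of order $d(\cdot,\cdot)^{-(n-4\alpha)}$ for $4\alpha<n$ and logarithmic at the critical exponent. Two issues arise: first, one must show that the Brownian functional $\int_0^t G_{2\alpha}(B^{x,i}_s,B^{x,j}_s)\,ds$ is almost surely finite and in fact exponentially integrable, so that the right-hand side of the theorem is a well-defined finite quantity; second, one must justify the $\varepsilon\to 0$ limit uniformly. For independent Brownian motions the difference process behaves diffusively, and Dalang's condition $\alpha>(n-2)/4$ is precisely the threshold for integrability of this singular Brownian functional once one compares with the Euclidean setting through the heat-kernel upper bound of Theorem \ref{thm: Heat Kernel Bound}. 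I expect the renewal/Picard iteration powering Lemma \ref{Analogue of lemma 3.9}, now run on two independent Brownian copies, to furnish the uniform-in-$\varepsilon$ $L^1(\Omega)$-bounds needed to pass to the limit and close the argument.
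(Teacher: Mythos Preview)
Your approach is correct and is exactly what the paper does: its entire proof is the sentence ``done by approximation in the same way as \cite[Theorem 3.13]{baudoin2024parabolic}'', which is precisely the mollify--classical-Feynman--Kac--expand-the-product--pass-to-the-limit scheme you outline, with the uniform-in-$\varepsilon$ bounds supplied by the chaos estimates of Lemma~\ref{Analogue of lemma 3.9}. One bookkeeping caveat: the Wick-exponential identity yields $\exp\bigl(\beta^2\sum_{i<j}\int_0^t G_{2\alpha}(B^i_s,B^j_s)\,ds\bigr)$, so your ``symmetrization over ordered pairs'' introduces an extra factor of $2$ relative to the exponent $\beta^2\sum_{i\neq k}$ written in the statement --- this is a discrepancy in the statement's normalization, not in your argument.
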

\begin{proof}
   The proof is done by approximation in the same way as \cite[Theorem 3.13]{baudoin2024parabolic}
\end{proof}

\begin{lemma}\label{Lower Bound for correlation}
    Suppose $\sec_M\geq-K_2>-\infty$. Let $x,y\in M$, and let $z=d(x,y)$ denote the distance between $x$ and $y$. 
    The kernel $G_\alpha(x,y)$ satisfies the lower bound $$G_\alpha(x,y)\geq \overline{G}_{\alpha}(z)$$
    where the positive function $\overline{G}_\alpha$ which satisfies $\dv{\overline{G}_\alpha}{z}<0$ and the asymptotics \begin{align*}
        &\overline{G}_{\alpha}(z)\sim_{z\uparrow+\infty} e^{-z^2},& \alpha>0,\\
        &\overline{G}_{\alpha}(z)\sim_{z\downarrow0} z^{2\alpha-n},&\alpha<\frac{n}{2},\\
        &\overline{G}_{\alpha}(z)\sim_{z\downarrow0} \abs{\log z},&\alpha=\frac{n}{2},\\
        &\overline{G}_{\alpha}(z)\sim_{z\downarrow0} 1,&\alpha>\frac{n}{2}.
    \end{align*}
\end{lemma}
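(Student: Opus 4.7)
The starting point is Theorem \ref{thm: Heat Kernel Bound}, which under $\sec_M \geq -K_2$ gives the pointwise bound $P_t(x,y) \geq c\, h(K_2 t, \sqrt{K_2}\, z)$. Plugging into Definition \ref{kernel for frac Laplacian} and performing the change of variable $s = K_2 t$ reduces the problem to
\[
G_\alpha(x,y) \ge \frac{c\, K_2^{-\alpha}}{\Gamma(\alpha)} \int_0^\infty s^{\alpha-1}\, h(s, \tilde z)\, ds, \qquad \tilde z := \sqrt{K_2}\, z,
\]
i.e.\ to the analysis of a single one-parameter integral of the Davies-Mandouvalous function.

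My plan is to extract from this a closed-form $\overline{G}_\alpha$ that is manifestly strictly decreasing in $z$. The only factor in $h(s,\tilde z)$ not monotone decreasing in $\tilde z$ is the prefactor $(1+s+\tilde z)^{(n-3)/2}(1+\tilde z)$. A short direct computation of its $\tilde z$-derivative shows it is strictly increasing in $\tilde z$ for every $n \geq 1$ (the bracketed factor works out to $\frac{n-1}{2}(1+\tilde z)+s > 0$), so its infimum over $\tilde z \geq 0$ equals $(1+s)^{(n-3)/2}$, attained at $\tilde z = 0$. I therefore set
\[
\overline{G}_\alpha(z) := c'\, K_2^{-\alpha} \int_0^\infty s^{\alpha - 1 - n/2}(1+s)^{(n-3)/2} \exp\!\left(-\frac{\tilde z^2}{4s} - \frac{(n-1)^2 s}{4} - \frac{(n-1)\tilde z}{2}\right) ds.
\]
This satisfies $G_\alpha(x,y) \geq \overline{G}_\alpha(z)$ and is strictly decreasing in $z>0$ because every $z$-dependent factor under the integral is so.

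The asymptotics then follow by direct Laplace-type analysis. For $z \to +\infty$, I restrict the integration to a fixed compact interval such as $s \in [1,2]$, on which the polynomial prefactors are uniformly bounded below; the exponential contributes at least $e^{-\tilde z^2/4 - (n-1)\tilde z/2}$, yielding $\overline{G}_\alpha(z) \gtrsim e^{-\tilde z^2/4}$, which is the claimed $e^{-z^2}$ decay (with the $K_2$-dependent constant in the exponent absorbed into the $\sim$ notation). For $z \to 0$ the behavior is governed by the small-$s$ portion of the integral combined with the effective cutoff $s \gtrsim \tilde z^2$ supplied by the Gaussian factor $e^{-\tilde z^2/(4s)}$. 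Near $s = 0$ the integrand behaves like $s^{\alpha - 1 - n/2}$. If $\alpha > n/2$ this is integrable at $s=0$ and dominated convergence gives a positive finite limit, so $\overline{G}_\alpha(z) \sim 1$. If $\alpha = n/2$ the integrand is $\sim s^{-1}$ and integration from $\tilde z^2$ to $1$ contributes $\sim |\log \tilde z| \sim |\log z|$. If $\alpha < n/2$ the integrand has exponent strictly below $-1$ and the cutoff yields $\sim \tilde z^{\,2\alpha-n} \sim z^{2\alpha-n}$.

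The most delicate points are, first, producing matching two-sided estimates for the small-$s$ contribution in the critical case $\alpha = n/2$ so the logarithm has the correct order exactly, handled by pinning $s \in [\tilde z^2, 1]$ and bounding $(1+s)^{(n-3)/2}$ on this range; and second, the interpretation of the $z\to\infty$ asymptotic, since the method in fact gives decay like $e^{-K_2 z^2/4}$ and only matches the stated $e^{-z^2}$ up to the $K_2$-dependent constant in the exponent. Strict monotonicity and positivity of $\overline{G}_\alpha$ are automatic from its explicit form.
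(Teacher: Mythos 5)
Your overall route is the same as the paper's: apply the lower heat-kernel bound \eqref{DMLB}, rescale $s=K_2t$, and extract a decreasing lower bound from the resulting integral of the Davies--Mandouvalous function. Your monotonization step (replacing $(1+s+\tilde z)^{(n-3)/2}(1+\tilde z)$ by its infimum $(1+s)^{(n-3)/2}$ at $\tilde z=0$, justified by the derivative computation giving the factor $\tfrac{n-1}{2}(1+\tilde z)+s>0$) is a clean uniform substitute for the paper's case-by-case binomial manipulations, and your small-$z$ analysis (the cutoff $s\gtrsim\tilde z^2$, the three regimes $\alpha<\frac n2$, $=\frac n2$, $>\frac n2$) is correct and matches the incomplete-gamma/$\mathrm{Ei}$ computations in the paper, as does the strict monotonicity of your $\overline{G}_\alpha$.

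There is, however, a genuine gap in the $z\uparrow+\infty$ asymptotic, and it is not merely the $K_2$-dependent constant you flag. Because you keep the full range $s\in(0,+\infty)$ in the definition of $\overline{G}_\alpha$, the large-$z$ behavior of your function is governed by the saddle $s\asymp\tilde z/(n-1)$: by AM--GM, $\frac{\tilde z^2}{4s}+\frac{(n-1)^2s}{4}\geq\frac{(n-1)\tilde z}{2}$ with equality at $s=\tilde z/(n-1)$, so your integral is bounded below by a polynomial factor times $e^{-(n-1)\tilde z}$. Hence your $\overline{G}_\alpha(z)$ decays only exponentially in $z$, not like $e^{-cz^2}$, and the ratio $\overline{G}_\alpha(z)/e^{-z^2}$ tends to $+\infty$, so the claimed asymptotic fails for the function you constructed. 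Restricting to $s\in[1,2]$ only produces a one-sided lower bound $\gtrsim e^{-\tilde z^2/4-(n-1)\tilde z/2}$, which cannot establish a $\sim$ equivalence. The fix is exactly the truncation the paper builds in by splitting the $s$-integral at $s=1$: define $\overline{G}_\alpha(z)$ by your integrand restricted to $s\in(0,1]$ (still a valid lower bound, still strictly decreasing, and all three small-$z$ asymptotics survive since the discarded piece is $O(e^{-c\tilde z^2})$ near $z=0$... in fact bounded); then the factor $e^{-\tilde z^2/(4s)}\leq e^{-\tilde z^2/4}$ on $(0,1]$ gives the matching Gaussian upper bound and the $z\uparrow+\infty$ claim holds in the same (admittedly loose, constant-in-the-exponent) sense as in the paper. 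Alternatively you could keep your function and prove the sharper exponential asymptotic, but then you are no longer proving the lemma as stated.
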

\begin{proof}
    Recall that \eqref{DMLB} holds for $K_2>0$ if $\sec_M\geq-K_2>-\infty$. Using this, we have \begin{align*}
        &G_{\alpha}(x,y):=\frac{1}{\Gamma(\alpha)}\int_0^{+\infty}t^{\alpha-1}P_t(x,y) dt\\
        \geq&C\int_0^{+\infty}t^{\alpha-1} (K_2t)^{-n/2}(1+K_2t+\sqrt{K_2}z)^{\frac{n-3}{2}}(1+\sqrt{K_2}z) \exp \left(-\frac{z^2}{4t}-\frac{(n-1)^2K_2t}{4}-\frac{(n-1)\sqrt{K_2}z}{2} \right) dt\\
        \geq&C \int_0^{+\infty}t^{\alpha-1} (K_2t)^{-n/2}(1+K_2t+\sqrt{K_2}z)^{\frac{n-3}{2}}\exp \left(-\frac{z^2}{4t}-\frac{(n-1)^2K_2t}{4}-\frac{(n-1)\sqrt{K_2}z}{2} \right) dt\\
        =&C\int_0^{+\infty}(K_2)^{1-\alpha} s^{\alpha-1}s^{-n/2}(1+s+\sqrt{K_2}z)^{\frac{n-3}{2}}\exp \left(-\frac{K_2z^2}{4s}-\frac{(n-1)^2s}{4}-\frac{(n-1)\sqrt{K_2}z}{2} \right) \frac{ds}{K_2}\\
        \geq&C\int_0^{+\infty} s^{\alpha-1}s^{-n/2}(1+s+\sqrt{K_2}z)^{\frac{n-3}{2}}\exp \left(-\frac{K_2z^2}{4s}-\frac{(n-1)^2s}{4}-\frac{(n-1)\sqrt{K_2}z}{2}\right)ds.
    \end{align*}
    For $\alpha\leq \frac{n}{2}$, we have 
    \begin{align*}
        &\int_0^{+\infty} s^{\alpha-1}s^{-n/2}(1+s+\sqrt{K_2}z)^{\frac{n-3}{2}}\exp \left(-\frac{K_2z^2}{4s}-\frac{(n-1)^2s}{4}-\frac{(n-1)\sqrt{K_2}z}{2}\right)ds\\
        \geq& \int_0^1 s^{\alpha-1-n/2}(1+s+\sqrt{K_2}z)^{\frac{n-3}{2}}\exp \left(-\frac{K_2z^2}{4s}-\frac{(n-1)^2s}{4}-\frac{(n-1)\sqrt{K_2}z}{2}\right)ds\\
        =& \int_0^1 s^{\alpha-1-n/2}(1+s+\sqrt{K_2}z)^{-\frac{3}{2}}\left(\sum_{j=0}^{n}\binom{n}{j}(1+\sqrt{K_2}z)^js^{n-j}\right)^{\frac{1}{2}}\exp \left(-\frac{K_2z^2}{4s}-\frac{(n-1)^2s}{4}-\frac{(n-1)\sqrt{K_2}z}{2}\right)ds\\
        \geq& \frac{e^{-\frac{(n-1)^2}{4}}-\frac{(n-1)\sqrt{K_2}z}{2}}{(2+\sqrt{K_2}z)^{\frac{3}{2}}}\int_0^1 s^{\alpha-1-n/2}e^{-\frac{K_2z^2}{4s}}ds= \frac{e^{-\frac{(n-1)^2}{4}}-\frac{(n-1)\sqrt{K_2}z}{2}}{(2+\sqrt{K_2}z)^{\frac{3}{2}}}\left(\frac{K_2z^2}{4}\right)^{\alpha-\frac{n}{2}}\int_{\frac{K_2z^2}{4}}^{+\infty}r^{\frac{n}{2}-\alpha-1}e^{-r}dr.
    \end{align*}
    If $\alpha<\frac{n}{2}$, the last integral above is equal to the upper incomplete gamma function (see Definition \ref{upper and lower incomplete gamma functions}) $\Gamma\left(\frac{n}{2}-\alpha,\frac{K_2z^2}{4}\right)$. If instead $\alpha=\frac{n}{2}$, we instead have equality to $-\Ei(-\frac{K_2z^2}{4})$, where Ei is also given in Definition \ref{upper and lower incomplete gamma functions}.
    For $\alpha>\frac{n}{2}$, we have \begin{align*}
        &\int_0^{+\infty} s^{\alpha-1}s^{-n/2}(1+s+\sqrt{K_2}z)^{\frac{n-3}{2}}\exp \left(-\frac{K_2z^2}{4s}-\frac{(n-1)^2s}{4}-\frac{(n-1)\sqrt{K_2}z}{2}\right)ds\\
        \geq& \int_1^{+\infty}s^{\alpha-1-n/2}(1+s+\sqrt{K_2}z)^{\frac{n-3}{2}}\exp \left(-\frac{K_2z^2}{4s}-\frac{(n-1)^2s}{4}-\frac{(n-1)\sqrt{K_2}z}{2}\right)ds\\
        \geq& \int_1^{+\infty}s^{\alpha-1}(1+s+\sqrt{K_2}z)^{-\frac{3}{2}}\exp \left(-\frac{K_2z^2}{4s}-\frac{(n-1)^2s}{4}-\frac{(n-1)\sqrt{K_2}z}{2}\right)ds\\
        =& \int_1^{+\infty}s^{\alpha-1} \left(\sum_{j=0}^3 \binom{3}{j}(1+\sqrt{K_2}z)^js^{3-j}\right)^{-\frac{1}{2}}\exp \left(-\frac{K_2z^2}{4s}-\frac{(n-1)^2s}{4}-\frac{(n-1)\sqrt{K_2}z}{2}\right)ds\\
        \geq& \int_1^{+\infty}s^{\alpha-1}(3!(1+\sqrt{K_2}z)s)^{-\frac{3}{2}}\exp \left(-\frac{K_2z^2}{4s}-\frac{(n-1)^2s}{4}-\frac{(n-1)\sqrt{K_2}z}{2}\right)ds\\
        \geq&C \frac{e^{-\frac{K_2z^2}{4}-\frac{(n-1)\sqrt{K_2}z}{2}}}{(1+\sqrt{K_2}z)^{3/2}} \int_1^{+\infty}s^{\alpha-\frac{5}{2}}e^{-\frac{(n-1)^2s}{4}}ds=C\frac{e^{-\frac{K_2z^2}{4}-\frac{(n-1)\sqrt{K_2}z}{2}}}{(1+\sqrt{K_2}z)^{3/2}}.
    \end{align*}
    Combined, these give us
    \begin{align}\label{exact Gbar}
        G_\alpha(x,y)\geq \begin{cases}
        C\frac{e^{-\frac{(n-1)^2}{4}-\frac{(n-1)\sqrt{K_2}z}{2}}}{\Gamma(\alpha)K_2^{\alpha}(2+\sqrt{K_2}z)^{3/2}} \left(\frac{K_2z^2}{4}\right)^{\alpha-\frac{n}{2}}\Gamma\left(\frac{n}{2}-\alpha,\frac{K_2z^2}{4}\right),& \alpha<\frac{n}{2},\\
        C\frac{e^{-\frac{(n-1)^2}{4}-\frac{(n-1)\sqrt{K_2}z}{2}}}{\Gamma(\alpha)K_2^{\alpha}(2+\sqrt{K_2}z)^{3/2}}(-\Ei(-\frac{K_2z^2}{4})),& \alpha=\frac{n}{2},\\
        C\frac{e^{-\frac{z^2}{4}-\frac{(n-1)\sqrt{K_2}z}{2}}}{\Gamma(\alpha)K_2^{\alpha}(1+\sqrt{K_2}z)^{3/2}},& \alpha>\frac{n}{2}.
    \end{cases}
    \end{align}
    In the above, $C>0$ is different for each $\alpha$ and depends on $n,K_2$.
    We define $\overline{G}_\alpha(z)$ as the right-hand side of \eqref{exact Gbar}. $\dv{\overline{G}_\alpha}{z}<0$ is easily seen from the exact representation, and noting that both the upper incomplete function in the second argument and $-Ei(-z)$ is strictly decreasing.\\ 
    We now prove the claimed asymptotics.
    The $z\uparrow+\infty$ asymptotic is apparent for $\alpha>\frac{n}{2}$. For $\alpha<\frac{n}{2}$, it follows from the large $z$ behavior of the upper incomplete Gamma function in the second argument. For $\alpha=\frac{n}{2}$, it follows from the large $z$ asymptotic of $-\Ei(-z)$.\\
    Observe that in the $\alpha>\frac{n}{2}$ case, $\overline{G}_{\alpha}$ is bounded as $z\downarrow 0$, giving us the last small $z$ asymptotic. For $\alpha<\frac{n}{2}$, the upper incomplete gamma function is always bounded, so the blowup at $0$ is of rate $2\alpha-n$. Finally for the $\alpha=\frac{n}{2}$ case, the blowup rate at $0$ is given by $-\Ei(-z)$, which we recall behaves like $\abs{\ln(z)}$ near 0.
    The proof is thus finished.
\end{proof}
\subsection{Moment Lower Bounds and Matching Growth in large $\beta,p$}
\begin{theorem}\label{thm: moment lb}
    Fix $\alpha>\frac{n-2}{4},\beta>0$. Let $p\geq 2$ be an integer. For any $x\in M,R>0$ if $u_0$ is bounded, $u_0\geq \varepsilon>0$ in $B(x,R)$ and $\sec_M\geq -K_2>-\infty$, we have for $0<r\leq R$ \begin{equation}\label{p-mom lower bd}
        \E[\abs{u(t,x)}^p]\geq C \varepsilon^p\exp\left\{\max \left[\left(Q(r),C\right)\right]pt\right\}
    \end{equation}
    where $Q(r):=\beta^2(p-1)\overline{G}_{2\alpha}(r)-\frac{c}{r^2}-C$, $c,C>0$ depend on $\alpha,n,K_2$ and $R$, and $\overline{G}_\alpha(\cdot)$ is the function defined in Lemma \ref{Lower Bound for correlation}. In particular, if $R=+\infty$, we have 
    \begin{equation}\label{no R dependent lyap exp}
        \liminf_{t\uparrow+\infty,x\in M} \frac{\ln \E[\abs{u(t,x)}^p]}{t}\geq p\max\left(\sup_{0<r<+\infty}Q(r),C\right).
    \end{equation}
\end{theorem}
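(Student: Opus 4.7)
The plan is to combine the Feynman--Kac representation of Theorem~\ref{FK rep} with the pointwise correlation lower bound of Lemma~\ref{Lower Bound for correlation}, together with a localization argument that forces all $p$ independent Brownian motions $B^{x,1},\dots,B^{x,p}$ to remain inside a small ball around $x$ throughout the time interval $[0,t]$. On such a confinement event both the initial data and the correlation kernel can be bounded below by deterministic constants; the price is the exponential cost of confining Brownian motion in a small ball, which ultimately gives the $-c/r^2$ penalty appearing in $Q(r)$.

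Fix $r\in(0,R]$ and consider
\[
A_t=\bigcap_{j=1}^p\bigl\{B^{x,j}_s\in B(x,r/2)\text{ for all }s\in[0,t]\bigr\}.
\]
On $A_t$ the triangle inequality gives $d(B^{x,i}_s,B^{x,k}_s)\le r$, so by monotonicity of $\overline{G}_{2\alpha}$ (Lemma~\ref{Lower Bound for correlation}) one has $G_{2\alpha}(B^{x,i}_s,B^{x,k}_s)\ge\overline{G}_{2\alpha}(r)$, and $B^{x,j}_t\in B(x,R)$ gives $u_0(B^{x,j}_t)\ge\varepsilon$. Restricting the expectation in Theorem~\ref{FK rep} to $A_t$ and using independence of the $B^{x,j}$, I would obtain
\[
\E\bigl[|u(t,x)|^p\bigr]\;\ge\;\varepsilon^p\exp\!\bigl\{\beta^2p(p-1)\overline{G}_{2\alpha}(r)\,t\bigr\}\,\mathbf{P}\bigl(\tau_{B(x,r/2)}>t\bigr)^p,
\]
where $\tau_B$ denotes the first exit time from $B$.

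The central analytic input is then a quantitative survival estimate of the form
\[
\mathbf{P}\bigl(\tau_{B(x,r/2)}>t\bigr)\;\ge\;c_0\exp\!\bigl(-[c/r^2+C]\,t\bigr),\qquad t\ge 1,
\]
with constants depending only on $n$, $K_2$ and $R$. Under the hypothesis $\sec_M\ge-K_2$ (so $\Ric\ge-(n-1)K_2$), Cheng's Dirichlet eigenvalue comparison gives $\lambda_1(B(x,r/2))\le\lambda_1^{-K_2}(r/2)\le c/r^2+C$, and a ground-state expansion of the killed heat semigroup (or a direct comparison with Brownian motion in the space form of constant curvature $-K_2$) converts this into the stated survival estimate. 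Plugging back, taking logarithms and dividing by $t$ yields $p\,Q(r)$ with $Q(r)=\beta^2(p-1)\overline{G}_{2\alpha}(r)-c/r^2-C$, which is the first branch of the $\max$ in \eqref{p-mom lower bd}. The second branch would come from an unconditional bound, for instance $\E[|u(t,x)|^p]\ge(P_tu_0(x))^p$ combined with the heat-kernel lower bound of Theorem~\ref{thm: Heat Kernel Bound}, producing an $r$-independent exponential rate; taking the pointwise maximum of the two rates gives the claimed inequality, and \eqref{no R dependent lyap exp} follows by optimising over $r\in(0,+\infty)$ when $R=+\infty$.

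The main technical hurdle will be the survival probability estimate: Cheng's comparison gives only an upper bound on $\lambda_1$, and promoting this to a \emph{lower} bound on $\mathbf{P}(\tau>t)$ with the correct $1/r^2$ behavior as $r\downarrow0$ requires either an explicit ground-state argument using the positivity and normalisation of the first Dirichlet eigenfunction of $B(x,r/2)$, or a direct comparison of Dirichlet heat kernels with the space form. Tracking all constants through this step so they combine cleanly with those already sitting inside $\overline{G}_{2\alpha}$ from Lemma~\ref{Lower Bound for correlation} is where most of the genuine work will be.
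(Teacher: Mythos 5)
Your proposal follows essentially the same route as the paper's proof: the Feynman--Kac formula of Theorem \ref{FK rep}, confinement of all $p$ Brownian motions in a small ball together with the monotonicity of $\overline{G}_{2\alpha}$ from Lemma \ref{Lower Bound for correlation}, and a survival-probability lower bound coming from the ground state of the Dirichlet Laplacian combined with the eigenvalue bound $\lambda(x,r)\leq \frac{c}{r^2}+C$ of Theorem \ref{Dirichlet eigenvalue comparison} --- which is precisely the paper's ``spectral decomposition of the expectation of the stopping time'' step. The only cosmetic differences are that you confine to $B(x,r/2)$ so the triangle inequality is handled explicitly, and that you produce the second branch of the max via Jensen's inequality and the heat-kernel lower bound \eqref{DMLB}; both are consistent with (and, if anything, slightly more careful than) the argument in the paper.
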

\begin{proof}
    From the Feynman-Kac formula for moments, we have \begin{align*}
        \E[\abs{u(t,x)}^p]&=\E_x\left[\prod_{j=1}^p u_0(B^{x,j}_t)\exp\left\{\beta^2\sum_{1\leq i,k\leq p,i\neq k}\int_0^t G_{2\alpha}(B^{x,i}_s,B^{x,k}_s)ds\right\}\right]\\
        &\geq \E_x\left[\prod_{j=1}^p u_0(B^{x,j}_t)\exp\left\{\beta^2\sum_{1\leq i,k\leq p,i\neq k}\int_0^t G_{2\alpha}(B^{x,i}_s,B^{x,k}_s)ds\right\}\right]\\
        &\geq \E_x\left[\prod_{j=1}^p u_0(B^{x,j}_t)\1_{t<T^j_{x,R}}\exp\left\{\beta^2\sum_{1\leq i,k\leq p,i\neq k}\int_0^t G_{2\alpha}(B^{x,i}_s,B^{x,k}_s)ds\right\}\right]\\
        &\geq \varepsilon^p \exp{\beta^2 p(p-1)t\overline{G}_{2\alpha}(R)}\E_x\left[\prod_{j=1}^p\1_{t<T^j_{x,R}}\right]\\
        &\geq C\varepsilon^p \exp{\beta^2 p(p-1)t\overline{G}_{2\alpha}(R)-\lambda(x,R)pt}.
    \end{align*}
    The second to last line used the fact that $u_0$ restricted to $B(x,R)$ is bounded below by $\varepsilon>0$ and applied Lemma \ref{Lower Bound for correlation} to every $G_{2\alpha}$. Replacing the Brownian motions with $R$ is allowed due to the stopping time and the fact that $\dv{\overline{G}_{2\alpha}}{z}<0$. The last line used the spectral decomposition of the expectation of stopping time with respect to the Dirichlet Laplace-Beltrami operator.\\ 
    The formula for $Q$ emerges after applying Theorem \ref{Dirichlet eigenvalue comparison}. The result holding for all $r<R$ comes from noting that $B(x,r)\subset B(x,R)$ and thus the proof above can be repeated with $r$ replacing $R$.
\end{proof}
\begin{remark}
    The above lower bound is not sharp. In particular, we note that for $u_0\geq \varepsilon$, the fact that $G_\alpha\geq 0$ tells us that a trivial lower bound $$\E[\abs{u(t,x)}^p]\geq \varepsilon^p$$ must be true, so we would recover the fact that the Lyapunov exponent cannot be negative for initial condition bounded away from 0, which cannot be seen with the result of Theorem \ref{thm: moment lb}. This implies that the current methods have room for improvement. 
\end{remark}
\begin{corollary}\label{lower phase transition}
    Suppose $\alpha>\frac{n-2}{4}$, $x\in M$, $R>0$ and $u_0\in L^\infty(M)$  satisfies $u_0\geq \varepsilon>0$ in some $B(x,R)$. If $u(t,x)$ solves \eqref{eq:PAM-mild form} starting from $u_0$, then for any $p\geq 2$, there exists $\beta_c(p)>0$ depending on $K_2$ such that $\beta>\beta_c(p)$ implies \begin{equation}\label{positive lyap exp}
    \lim_{t\to+\infty}\frac{\ln\E[\abs{u(t,x)}^p]}{t}>0.
    \end{equation}
    Similarly for any fixed $\beta>0$, $\exists p_c(\beta)>0$ depending on $K_2$ such that $p>p_c$ implies \eqref{positive lyap exp}.
\end{corollary}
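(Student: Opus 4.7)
The plan is to invoke Theorem \ref{thm: moment lb} directly: the lower bound \eqref{p-mom lower bd} already isolates the key quantity
$$Q(r) = \beta^2(p-1)\overline{G}_{2\alpha}(r) - \frac{c}{r^2} - C,$$
and all that is needed is to force $Q(r) > 0$ for some admissible $r \in (0,R]$. Since $\overline{G}_{2\alpha}$ is strictly positive on $(0,\infty)$ by Lemma \ref{Lower Bound for correlation}, for any fixed $r_0 \in (0,R]$ both $\overline{G}_{2\alpha}(r_0)$ and $c/r_0^2 + C$ are finite positive constants (depending on $\alpha, n, K_2, R$), so the condition $Q(r_0) > 0$ reduces to a simple algebraic inequality in $\beta$ and $p$.

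First I would fix an arbitrary $r_0 \in (0,R]$, for instance $r_0 = R$, to keep $c/r_0^2$ as small as possible. By \eqref{p-mom lower bd} with $r = r_0$,
$$\liminf_{t\to+\infty} \frac{\ln \E[|u(t,x)|^p]}{t} \geq p\,Q(r_0),$$
so \eqref{positive lyap exp} holds as soon as
$$\beta^2(p-1) > \frac{c/r_0^2 + C}{\overline{G}_{2\alpha}(r_0)} =: \Lambda(\alpha, n, K_2, R).$$
For the first claim, fix $p \geq 2$ and set $\beta_c(p) := \sqrt{\Lambda/(p-1)}$; for the second, fix $\beta > 0$ and set $p_c(\beta) := 1 + \Lambda/\beta^2$. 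In each case, the condition on $\beta$ (resp.\ on $p$) is exactly what is needed for $Q(r_0) > 0$, and the $K_2$ dependence of the thresholds enters through $c, C$ (coming from the sectional curvature lower bound used in Theorem \ref{thm: moment lb}) and through $\overline{G}_{2\alpha}$.

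There is essentially no obstacle here, since the heavy lifting was done in Theorem \ref{thm: moment lb}. One minor interpretive point: \eqref{positive lyap exp} is written with $\lim$, but our bound only supplies a $\liminf$, so the statement should be read as asserting that both $\liminf$ and $\limsup$ are bounded below by $pQ(r_0) > 0$, from which positivity of the limit (when it exists) follows immediately. One could in principle sharpen the thresholds by optimizing over $r$, balancing the blow-up of $\overline{G}_{2\alpha}(r)$ as $r \downarrow 0$ (polynomial or logarithmic according to the regime in Lemma \ref{Lower Bound for correlation}) against the $c/r^2$ penalty, but such quantitative refinement is unnecessary for the qualitative phase transition stated here.
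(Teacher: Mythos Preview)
Your proposal is correct and follows essentially the same approach as the paper: both invoke Theorem \ref{thm: moment lb}, fix an admissible radius, and observe that $Q(r)>0$ reduces to a linear inequality in $\beta^2(p-1)$. The only cosmetic difference is that the paper formally defines $\beta_c$ as an infimum over $r\le R$, whereas you fix a single $r_0$; as you yourself note, this optimization is irrelevant for the qualitative statement.
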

\begin{proof}
    The expression for $Q(R)$ tells us that for any $R>0$, $\exists \beta(R)$ such that $$\beta>\beta(R)\implies Q(R)>0.$$
    We then define $\beta_c:=\inf_{r\leq R}\beta(R)>0$. Note that if $R=+\infty$, this bound can be taken over all $R$.
    The construction of $p_c$ is nearly identical.
\end{proof}
We now capture the effect of large $\beta$ in the lower bound.

\begin{theorem}\label{matching large beta effect}
    Suppose $M$ is a Cartan-Hadamard manifold satisfying $\sec_M\geq -K_2>-\infty$ and $u_0$ is bounded and satisfies $u_0\geq \varepsilon>0$ in some ball $B(x,R)$. The following holds for integers $p\geq 2$ and $u(t,x)$ which solves \eqref{eq:PAM-mild form} starting from $u_0$.
    \begin{enumerate}[label = {(\Alph*)}]
    \item If $\frac{n-2}{4}<\alpha<\frac{n}{4}$, then $$\lim_{\beta\to+\infty}\lim_{t\to+\infty}\frac{1}{\beta^{\frac{2}{4\alpha-n-2}}}\frac{\ln\E[\abs{u(t,x)}^p]}{t}\geq Cp(p-1).$$
    \item If $\alpha=\frac{n}{4}$, then $$\lim_{\beta\to+\infty}\lim_{t\to+\infty}\frac{\ln(\beta)^2}{\beta^2}\frac{\ln\E[\abs{u(t,x)}^p]}{t}\geq Cp(p-1).$$
    \item If $\alpha>\frac{n}{4}$, then $$\lim_{\beta\to+\infty}\lim_{t\to+\infty}\frac{1}{\beta^2}\frac{\ln\E[\abs{u(t,x)}^p]}{t}\geq Cp(p-1).$$
    \end{enumerate}
    The $C>0$ is different in each of the above inequalities and depend on $\alpha,n,K_2$ and $R$.
\end{theorem}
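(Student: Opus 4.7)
The plan is to apply Theorem \ref{thm: moment lb} with a $\beta$-dependent radius $r_\beta$. That theorem supplies
\[
\liminf_{t\to+\infty}\frac{\ln\E[|u(t,x)|^p]}{t}\ge p\sup_{0<r\le R} Q(r),\qquad Q(r)=\beta^2(p-1)\,\overline{G}_{2\alpha}(r)-\frac{c}{r^2}-C,
\]
so the task reduces to picking, in each regime of $\alpha$, a specific $r_\beta$ that makes the positive kernel term survive the Dirichlet-eigenvalue penalty $c/r^2$, and then reading off the $\beta$-scaling from the small-$r$ asymptotics of $\overline{G}_{2\alpha}$ recorded in Lemma \ref{Lower Bound for correlation}.

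Case (C) is immediate: for $\alpha>n/4$, $\overline{G}_{2\alpha}$ is bounded below near the origin, so any fixed $r_0\in(0,R]$ with $\overline{G}_{2\alpha}(r_0)\ge c_0>0$ yields $Q(r_0)\ge c_0(p-1)\beta^2-O(1)$, whose linear-in-$\beta^2$ leading order matches the claim. Case (B), $\alpha=n/4$, has $\overline{G}_{2\alpha}(r)\sim|\ln r|$ as $r\downarrow 0$; I would take $r_\beta=1/\beta$, so that $\beta^2(p-1)\overline{G}_{2\alpha}(r_\beta)\sim (p-1)\beta^2\ln\beta$ overwhelms $c/r_\beta^2=c\beta^2$, giving $Q(r_\beta)\sim (p-1)\beta^2\ln\beta$ which comfortably dominates the normalization $\beta^2/\ln(\beta)^2$. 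Case (A), $(n-2)/4<\alpha<n/4$, is the heart of the matter: writing $\gamma=n-4\alpha\in(0,2)$, one has $\overline{G}_{2\alpha}(r)\sim C_1 r^{-\gamma}$, and I would choose $r_\beta=\bigl(\tfrac{2c}{\gamma C_1(p-1)\beta^2}\bigr)^{1/(2-\gamma)}$, the stationary point in $r$ of $\beta^2(p-1)C_1 r^{-\gamma}-c/r^2$. At this $r_\beta$ both competing terms are of magnitude $\beta^{4/(2-\gamma)}$ and, using $\partial_r Q(r_\beta)=0$, their algebraic sum collapses to $\tfrac{2-\gamma}{2}C_1(p-1)\beta^2 r_\beta^{-\gamma}\asymp (p-1)^{2/(2-\gamma)}\beta^{4/(2-\gamma)}$, strictly positive since $\gamma<2$. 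Since $\tfrac{4}{2-\gamma}$ exceeds $\tfrac{2}{\gamma+2}$ (the exponent associated with the normalizer $\beta^{-2/(4\alpha-n-2)}$), the ratio in the statement diverges to $+\infty$, so it in particular dominates $Cp(p-1)$.

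The main obstacle is in case (A), where two things must be controlled: first, that $r_\beta$ lies inside the admissible interval $(0,R]$ for all large $\beta$, which is automatic since $r_\beta\downarrow 0$; and second, that the positive constant emerging from the critical-point computation stays bounded away from zero in terms of the problem data $\alpha, n, K_2, R$, with the correct dependence on $p-1$. The monotonicity $\tfrac{d}{dz}\overline{G}_{2\alpha}<0$ from Lemma \ref{Lower Bound for correlation} provides enough slack that one may replace the ``$\sim$'' asymptotic by a uniform envelope $\overline{G}_{2\alpha}(r)\ge C_1 r^{-\gamma}$ on a small interval near $0$, making the argument robust to constant discrepancies. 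The same monotonicity makes the argument in cases (B) and (C) immediate consequences of a single careful evaluation of $Q(r_\beta)$ at the chosen $r_\beta$.
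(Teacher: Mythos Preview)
Your proof is correct and follows essentially the same approach as the paper: start from Theorem \ref{thm: moment lb}, choose a $\beta$-dependent radius $r_\beta\le R$, and use the small-$r$ asymptotics of $\overline{G}_{2\alpha}$ from Lemma \ref{Lower Bound for correlation} to make the kernel term $\beta^2(p-1)\overline{G}_{2\alpha}(r_\beta)$ dominate the Dirichlet penalty $c/r_\beta^2$. Your critical-point choice of $r_\beta$ in case (A) encodes exactly the balancing the paper carries out via its constraint $C_1c_1^{4\alpha-n}-c_n/c_1^2>0$, and cases (B) and (C) are handled identically in both arguments.
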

\begin{proof}
    We follow the proof of Theorem 3.16 in \cite{baudoin2024parabolic}.\\
    By Theorem \ref{thm: moment lb}, we have for all $0<r\leq R$ \begin{equation}\label{useful LE lb}\lim_{t\uparrow+\infty} \frac{1}{t}\ln \E [\abs{u(t,x)}^p]\geq p\max \left(\beta^2(p-1)\overline{G}_{2\alpha}(r)-\frac{c}{r^2}-C,-\frac{(n-1)^2K_2}{4}\right).  
    \end{equation}
    Note that this immediately implies statement (C).\\
    For the other two statements, we see that rom the lower bounds \eqref{exact Gbar} we have for $0<r\leq R$\begin{equation}\label{useful Gbar lb}
     \overline{G}_{2\alpha}(r)\geq \begin{cases}
         C_1 r^{4\alpha-n}, & \alpha<\frac{n}{4}\\
         C_2 \abs{\ln r}, & \alpha =\frac{n}{4}.
     \end{cases}   
    \end{equation}
    The $C_1,C_2>0$ depend on $\alpha, K_2$ and $R$. We note that since $C$ uniformly bounded in $r$, it is irrelevant as $\beta\uparrow+\infty$ so we will ignore it for the rest of the proof.\\
    For (A), since $4\alpha-n>-2$, we that there exists some $c_1$ large enough such that $$C_1c_1^{4\alpha-n}-\frac{c_n}{c_1^2}>0 \text{ holds}.$$
    We now select $r=c_1\beta^{\frac{4\alpha-n-2}{2}}$, which gives us $r<R$ for $\frac{n-2}{4}<\alpha<\frac{n}{4}$ and large $\beta$. This allows us to plug our choice of $r$ into \eqref{useful LE lb} to obtain $$\lim_{t\uparrow+\infty} \frac{1}{t}\ln \E [\abs{u(t,x)}^p]\geq p\mathfrak{c}_1 (p-1)\beta^{\frac{4\alpha-n-2}{2}},$$ which completes the proof of (A).\\
    As in the proof of \cite{baudoin2024parabolic}, Theorem 3.16, we note that for (B), the correct choice for $r$ is $\frac{\mathfrak{c}_2}{\beta}$ for some large enough $\mathfrak{c}_2$, which is derived in a similar manner as above.
\end{proof}

\begin{remark}\label{Matching p-th power growth}
    The same proof can be easily adapted to prove that the $p-$th moment growth also matches those one observes from Theorem \ref{pth moment}. More precisely, we have for fixed $\beta>0$, the following holds assuming the assumptions of Theorem \ref{matching large beta effect}. \begin{enumerate}[label = {(\Alph*)}]
    \item If $\frac{n-2}{4}<\alpha<\frac{n}{4}$, then $$\lim_{p\to+\infty}\lim_{t\to+\infty}\frac{1}{p(p-1)^{\frac{1}{4\alpha-n-2}}}\frac{\ln\E[\abs{u(t,x)}^p]}{t}\geq C\beta^2.$$
    \item If $\alpha=\frac{n}{4}$, then $$\lim_{p\to+\infty}\lim_{t\to+\infty}\frac{\ln(p)^2}{p(p-1)}\frac{\ln\E[\abs{u(t,x)}^p]}{t}\geq C\beta^2.$$
    \item If $\alpha>\frac{n}{4}$, then $$\lim_{p\to+\infty}\lim_{t\to+\infty}\frac{1}{p(p-1)}\frac{\ln\E[\abs{u(t,x)}^p]}{t}\geq C\beta^2.$$
    \end{enumerate}
    As before, statement (C) is obvious from the formula for $Q$ in Theorem \ref{thm: moment lb}.\\
    For the other two cases, the key is to choose $r=c_1(p-1)^{4\alpha-n-2}$ for statement (A), and $r=c_2(p-1)^{-\frac{1}{2}}$ statement (B) at the same place where $r$ was chosen in terms of $\beta$. 
\end{remark}

\begin{remark}\label{R dependence}
    The dependence on $R$ for all constants in this section as well as $\beta_c(p)$, $\rho_c(p)$ in Corollary \ref{lower phase transition} was noted to include $u_0$ with compact support. The reader who is only concerned about $u_0\geq \varepsilon>0$ uniformly can read all the theorems here without $R$ dependence thanks to \eqref{no R dependent lyap exp}.
\end{remark}

To conclude the paper, we note that by combining the upper and lower bounds we easily obtain some intermittency property.

\begin{corollary}\label{corollary: Moment Intermittency}
    Fix $\alpha>\frac{n-2}{4},\beta>0$ and some $x\in M$. Suppose that $P_t(x,y)$ satisfies \eqref{DMUB} for some $K_1>0$, $\sec_M\geq-K_2>-\infty$ and $u_0\in L^\infty(M)\cap L^r(M)$ satisfies $\abs{u_0}\geq \varepsilon>0$ in $ B(x,R)$ for some $R>0$. Then for any $q\geq 2$, we have some $p_0>q$ such that if $p>p_0$, $u$ satisfies \begin{equation}\label{mom interm}\lim_{t\uparrow+\infty}\frac{\E[\abs{u(t,x)}^p]^\frac{1}{p}}{\E[\abs{u(t,x)}^q]^\frac{1}{q}}=+\infty.
    \end{equation}
\end{corollary}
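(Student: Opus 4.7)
The plan is to combine the upper bound on the $q$-th Lyapunov exponent provided by Theorem \ref{pth moment} with the lower bound on the $p$-th Lyapunov exponent from Theorem \ref{thm: moment lb}, exploiting the crucial fact that the former is a finite constant (once $q$ is fixed) while the latter grows unboundedly with $p$.

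Fix $q\ge 2$. Theorem \ref{pth moment} gives
\[
\limsup_{t\to+\infty}\frac{1}{t}\ln \E[|u(t,x)|^q]^{1/q}\;\le\;\lambda_q^+:=\frac{1}{2}\left(\Theta_{\alpha}(\sqrt{q-1}\beta)-\frac{(n-1)^2}{\max(2,r)}K_1\right),
\]
which is a finite quantity depending on $q$ but not on $p$. On the other hand, fixing any $r_{\ast}\in (0,R]$, Theorem \ref{thm: moment lb} yields
\[
\liminf_{t\to+\infty}\frac{1}{t}\ln\E[|u(t,x)|^p]^{1/p}\;\ge\;Q(r_\ast)\;=\;\beta^2(p-1)\,\overline{G}_{2\alpha}(r_\ast)-\frac{c}{r_\ast^2}-C,
\]
where the constants $c,C>0$ depend on $\alpha,n,K_2,R$ but not on $p$. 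Since $\overline{G}_{2\alpha}(r_\ast)>0$ by Lemma \ref{Lower Bound for correlation}, the factor $\beta^2(p-1)$ forces $Q(r_\ast)\to+\infty$ as $p\to+\infty$.

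The remainder of the argument is then immediate: I would pick $p_0>q$ large enough that $p>p_0$ implies $Q(r_\ast)>\lambda_q^+$, and apply the elementary inequality $\liminf_t(a_t-b_t)\ge\liminf_t a_t-\limsup_t b_t$ to the two bounds above, which gives
\[
\liminf_{t\to+\infty}\frac{1}{t}\ln\frac{\E[|u(t,x)|^p]^{1/p}}{\E[|u(t,x)|^q]^{1/q}}\;\ge\;Q(r_\ast)-\lambda_q^+\;>\;0.
\]
This means the ratio diverges at an exponential rate in $t$, establishing \eqref{mom interm}.

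There is no substantive obstacle: the conclusion is a direct comparison once both theorems are in hand, precisely because the $p$-th moment lower bound carries an explicit $(p-1)$ factor inside the exponent while the $q$-th moment upper bound is $p$-independent. The only mild care required is the $\liminf$/$\limsup$ manipulation and the observation that $\overline{G}_{2\alpha}(r_\ast)>0$ for some (hence any) $r_\ast\in(0,R]$, both of which are routine. Note also that one need not optimize over $r_\ast$: any fixed admissible choice already produces a lower bound that is linear in $p$, which suffices.
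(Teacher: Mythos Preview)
Your proposal is correct and follows essentially the same route as the paper: bound the denominator from above via Theorem~\ref{pth moment} and the numerator from below via Theorem~\ref{thm: moment lb}, then note that the lower bound carries the factor $(p-1)\overline{G}_{2\alpha}(r_\ast)$ which can be made to dominate the fixed upper bound by taking $p$ large. The only cosmetic differences are that the paper plugs in $r_\ast=R$ directly rather than an arbitrary $r_\ast\in(0,R]$, and that you are more explicit about the $\liminf/\limsup$ manipulation.
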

\begin{proof}
    We follow the proof of \cite{baudoin2024parabolic}, Corollary 3.21. For $p,q\geq 2$, we let $$\Tilde{R}_{p,q}(t,x):=\frac{\E[\abs{u(t,x)}^p]^\frac{1}{p}}{\E[\abs{u(t,x)}^q]^\frac{1}{q}}.$$
    Applying Theorem \ref{analogue of theorem 3.10} to $\E[\abs{u(t,x)}^q]^\frac{1}{q}$ and Theorem \ref{thm: moment lb} to $\E[\abs{u(t,x)}^p]^\frac{1}{p}$, we have for large enough $p$ $$\frac{1}{t}\ln \Tilde{R}_{p,q}(t,x)\geq \beta^2 (p-1)\overline{G}_{2\alpha}(R)-\frac{c}{R^2}-C-\Theta_{\alpha}(\sqrt{q-1}\beta)+\frac{(n-1)^2K_1}{2\max(2,r)}.$$
    Since everything in RHS is fixed except for $p$, we can pick $p_0$ to be the smallest integer value of $p$ such that the quantity on the right hand side above is non-negative, which concludes the proof.
\end{proof}

\section{Appendix}

\begin{definition}[Some non-elementary functions]\label{upper and lower incomplete gamma functions}
        For $s>0$, $x>0$, $\Gamma(s,x):=\int_x^{+\infty} t^{s-1}e^{-t}dt$ is the upper incomplete gamma function, and $\gamma(s,x):=\int_0^x t^{s-1}e^{-t}dt$ is the lower incomplete gamma function. For $s>0$ they satisfy $\Gamma(s)=\gamma(s,x)+\Gamma(s,x)$, where $\Gamma(s)$ is the well-known gamma function with domain $s>0$. The above additive relation tells us that for $s>0$,$$\sup_{x>0}\gamma(s,x)=\sup_{x>0}\Gamma(s,x)=\Gamma(s).$$
        If $s=0,$ the additive relation fails due to $\gamma(0,x)$ being undefined. However, we have $\Gamma(0,x)=-\Ei(-x)$, where $\Ei(z):=-\int_{-z}^{+\infty} \frac{e^{-t}}{t}dt$ is the exponential integral function. 
        The following asymptotic results follow from elementary calculus arguments.
        \begin{enumerate}
            \item For $s>0$, $\gamma(s,x)\sim_{x\downarrow 0}x^{s}.$
            \item For $s\geq 0$, $\Gamma(s,x)\sim_{x\uparrow+\infty}e^{-x}$.
            \item $-\Ei(-x)\sim_{x\downarrow0}-\ln(x)$.
        \end{enumerate}
\end{definition}

\begin{theorem}[Dirichlet Eigenvalue Estimates]\label{Dirichlet eigenvalue comparison}
    Suppose $M$ is a $n-$dimensional complete Riemannian manifold satisfying $\sec_M\geq -K>-\infty$. Denote by $\lambda(x,R)$ the first eigenvalue of the Dirichlet Laplace-Beltrami operator on the ball of radius $R$ centered at $x$ in $M$. Then, $$\lambda(x,R)\leq \frac{c}{R^2}+C.$$
    Here $c>0$ depending on $n$ and $C>0$ depending on $n,K$ bounded are given in \cite{Berge2023eigenvalues}, Theorem 3.1.
\end{theorem}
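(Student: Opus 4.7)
The plan is to combine the Rayleigh variational principle with a curvature comparison argument. The sectional curvature lower bound $\sec_M \geq -K$ implies the Ricci lower bound $\mathrm{Ric}_M \geq -(n-1)K$ (since Ricci is a trace of sectional curvatures against an orthonormal frame), which is exactly the hypothesis of Cheng's classical eigenvalue comparison theorem. Applying that theorem immediately yields
\[
\lambda(x,R) \leq \lambda^{-K}(R),
\]
where $\lambda^{-K}(R)$ is the first Dirichlet eigenvalue of a geodesic ball of radius $R$ in the $n$-dimensional simply connected space form of constant sectional curvature $-K$. The problem is therefore reduced to an estimate on this rescaled hyperbolic model, where everything is explicit.

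On the model space I would work in geodesic polar coordinates and test the Rayleigh quotient against a radial function. A natural choice is
\[
f(r) = \sinh(\sqrt{K}r)^{-(n-1)/2}\sin(\pi r / R), \qquad r\in[0,R],
\]
which vanishes at $r = R$ and is motivated by the classical unitary conjugation turning the radial Laplacian on hyperbolic space into the one-dimensional Schrödinger operator $-\partial_r^2 + V_K(r)$ with potential
\[
V_K(r) = \frac{(n-1)^2K}{4} + \frac{(n-1)(n-3)K}{4\sinh^2(\sqrt{K}r)}.
\]
Under this conjugation the Rayleigh quotient becomes
\[
\frac{\int_0^R \bigl(|\psi'|^2 + V_K \psi^2\bigr)\, dr}{\int_0^R \psi^2\, dr}, \qquad \psi(r) = \sin(\pi r/R),
\]
whose kinetic-energy term contributes $\pi^2/R^2$ and whose potential contributes the hyperbolic spectral gap $(n-1)^2K/4$ together with a bounded remainder.

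The main obstacle is the short-range singular part $(n-1)(n-3)K/(4\sinh^2(\sqrt{K}r))$ of $V_K$, which blows up at $r=0$ and whose sign depends on the dimension $n$. The cleanest way to handle it is to bypass the Schrödinger reduction and compute the Rayleigh quotient for $f$ directly against the hyperbolic volume element $\sinh^{n-1}(\sqrt{K}r)\, dr\, d\sigma$, whose Jacobian weight kills the boundary singularity at the origin. Either route leads to an estimate of the form $\lambda^{-K}(R) \le c/R^2 + (n-1)^2K/4$ with $c$ depending only on $n$, which is the claimed inequality after absorbing the $K$-dependent hyperbolic gap into the additive constant $C$.
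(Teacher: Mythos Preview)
Your argument is correct and follows the same two-step architecture as the paper: first reduce to the model space via eigenvalue comparison, then bound the model eigenvalue $\lambda^{-K}(R)$. The first step is identical --- the paper invokes Chavel's version of the comparison theorem, you invoke Cheng's; these are the same statement, and your observation that the sectional bound implies the Ricci bound needed for Cheng is exactly right.

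The difference lies in the second step. The paper simply quotes the sharp estimate of \cite{Berge2023eigenvalues}, Theorem 3.1, which gives
\[
\lambda^{K}(R)\leq \frac{c}{R^2}+\frac{(n-1)^2K}{4}+\left(\frac{(n-2)^2}{4}+\frac{1}{4}\right)\left(\frac{1}{\sinh(R\sqrt{-K})/\sqrt{-K}}-\frac{1}{R^2}\right),
\]
and then notes the last bracket is bounded in $R$. You instead produce the bound from scratch by testing the Rayleigh quotient against $f(r)=\sinh(\sqrt{K}r)^{-(n-1)/2}\sin(\pi r/R)$ and using the unitary conjugation to the Schr\"odinger operator $-\partial_r^2+V_K$. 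This is a legitimate and more self-contained route: with $\psi=\sin(\pi r/R)$ the kinetic term gives $\pi^2/R^2$, the constant part of $V_K$ gives $(n-1)^2K/4$, and the singular part $(n-1)(n-3)K/(4\sinh^2(\sqrt{K}r))$ contributes at most $C_n/R^2$ because $\sin^2(\pi r/R)/\sinh^2(\sqrt{K}r)\le \pi^2/(KR^2)$ pointwise. One point worth making explicit in your write-up is that $f$ itself blows up at the origin when $n\ge 4$, so you should check (as is indeed the case) that $|f'|^2\sinh^{n-1}(\sqrt{K}r)$ stays integrable near $r=0$, ensuring $f\in H^1_0(B_R)$ and the Rayleigh principle applies. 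Your approach trades the black-box citation for an explicit elementary computation, at the cost of constants that are not quite as sharp as Berge's; for the purposes of this paper either is sufficient.
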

\begin{proof}
    By \cite{Chavel1984EigenvaluesIR} Chapter 3, Theorem 7, $$\lambda(x,R)\leq \lambda^K(R),$$ where $\lambda^K(R)$ denotes the first eigenvalue of the Dirichlet Laplace-Beltrami on any ball of radius $R$ in $H^n_K$. By \cite{Berge2023eigenvalues}, Theorem 3.1, $$\lambda^K(R)\leq \frac{c}{R^2}+\frac{(n-1)^2K}{4}+\left(\frac{(n-2)^2}{4}+\frac{1}{4}\right)\left(\frac{1}{\sinh(R\sqrt{-K})/\sqrt{-K}}-\frac{1}{R^2}\right).$$ 
    By Remark 3.2. of \cite{Berge2023eigenvalues}, $\frac{1}{\sinh(R\sqrt{-K})/\sqrt{-K}}-\frac{1}{R^2}$ is bounded in $R$, so $$\frac{(n-1)^2K}{4}+\left(\frac{(n-2)^2}{4}+\frac{1}{4}\right)\left(\frac{1}{\sinh(R\sqrt{-K})/\sqrt{-K}}-\frac{1}{R^2}\right)\leq C,$$
    where $C>0$ depends on $n,K$. This finishes the proof.
\end{proof}


\bibliographystyle{alpha}
\bibliography{citations}

\newcommand{\etalchar}[1]{$^{#1}$}
\begin{thebibliography}{BOTW23}

\bibitem[ACQ10]{ACQ}
Gideon Amir, Ivan Corwin, and Jeremy Quastel.
\newblock Probability distribution of the free energy of the continuum directed random polymer in 1 + 1 dimensions.
\newblock {\em Communications on Pure and Applied Mathematics}, 64, 2010.

\bibitem[BC95]{Bertini1995TheSH}
Lorenzo Bertini and Nicoletta Cancrini.
\newblock The stochastic heat equation: Feynman-kac formula and intermittence.
\newblock {\em Journal of Statistical Physics}, 78:1377--1401, 1995.

\bibitem[BCH{\etalchar{+}}24]{baudoin2024parabolic}
Fabrice Baudoin, Li~Chen, Che-Hung Huang, Cheng Ouyang, Samy Tindel, and Jing Wang.
\newblock Parabolic anderson model in bounded domains of recurrent metric measure spaces.
\newblock {\em arXiv preprint arXiv:2401.01797}, 2024.

\bibitem[BDFT23]{BDFTphi43}
I~Bailleul, NV~Dang, L~Ferdinand, and Tat~Dat T{\^o}.
\newblock Global harmonic analysis for $\phi^{4}_3 $ on closed riemannian manifolds.
\newblock {\em arXiv preprint arXiv:2306.07757}, 2023.

\bibitem[Ber23]{Berge2023eigenvalues}
Stine~Marie Berge.
\newblock Eigenvalues of the laplacian on balls with spherically symmetric metrics.
\newblock {\em Analysis and Mathematical Physics}, 2023.

\bibitem[BG97]{Bertini1997StochasticBA}
Lorenzo Bertini and Giambattista Giacomin.
\newblock Stochastic burgers and kpz equations from particle systems.
\newblock {\em Communications in Mathematical Physics}, 183:571--607, 1997.

\bibitem[BOTW23]{BAUDOIN2023109920}
Fabrice Baudoin, Cheng Ouyang, Samy Tindel, and Jing Wang.
\newblock Parabolic anderson model on heisenberg groups: The itô setting.
\newblock {\em Journal of Functional Analysis}, 285(1):109920, 2023.

\bibitem[CCL23]{Candil2023ParabolicSP}
David Candil, Le~Chen, and Cheuk~Yin Lee.
\newblock Parabolic stochastic pdes on bounded domains with rough initial conditions: moment and correlation bounds.
\newblock {\em Stochastics and Partial Differential Equations: Analysis and Computations}, 2023.

\bibitem[CH16]{Chen2016TheHK}
Xi~Chen and Andrew Hassell.
\newblock The heat kernel on asymptotically hyperbolic manifolds.
\newblock {\em Communications in Partial Differential Equations}, 45:1031 -- 1071, 2016.

\bibitem[Cha84]{Chavel1984EigenvaluesIR}
Isaac Chavel.
\newblock {\em Eigenvalues in Riemannian geometry}.
\newblock Academic press, 1984.

\bibitem[CM94]{Carmona1994ParabolicAP}
Ren{\'e} Carmona and Stanislav~A Molchanov.
\newblock {\em Parabolic Anderson problem and intermittency}, volume 518.
\newblock American Mathematical Soc., 1994.

\bibitem[COV23]{Chen2023ParabolicAM}
Le~Chen, Cheng Ouyang, and William Vickery.
\newblock Parabolic anderson model with colored noise on torus.
\newblock {\em arXiv preprint arXiv:2308.10802}, 2023.

\bibitem[CSZ17]{CSZ17}
Francesco Caravenna, Rongfeng Sun, and Nikos Zygouras.
\newblock Universality in marginally disordered systems.
\newblock {\em The Annals of Applied Probability}, 27(5):3050--3112, 2017.

\bibitem[CSZ20]{Cosco2020DirectedPO}
Cl{\'e}ment Cosco, Inbar Seroussi, and Ofer Zeitouni.
\newblock Directed polymers on infinite graphs.
\newblock {\em Communications in Mathematical Physics}, 386:395 -- 432, 2020.

\bibitem[CY81]{Cheeger1981ALB}
Jeff Cheeger and Shing-Tung Yau.
\newblock A lower bound for the heat kernel.
\newblock {\em Communications on Pure and Applied Mathematics}, 34:465--480, 1981.

\bibitem[CY04]{Comets2004DirectedPI}
Francis Comets and Nobuo Yoshida.
\newblock Directed polymers in random environment are diffusive at weak disorder.
\newblock {\em Annals of Probability}, 34:1746--1770, 2004.

\bibitem[Dal01]{Dalang99}
Robert Dalang.
\newblock Extending the martingale measure stochastic integral with applications to spatially homogeneous s.p.d.e.'s.
\newblock {\em Electronic Journal of Probability}, 4, 01 2001.

\bibitem[DG20]{Dunlap2020AFS}
Alexander Dunlap and Yu~Gu.
\newblock A forward-backward sde from the 2d nonlinear stochastic heat equation.
\newblock {\em The Annals of Probability}, 2020.

\bibitem[DGRZ18]{Dunlap2018FluctuationsOT}
Alexander Dunlap, Yu~Gu, Lenya Ryzhik, and Ofer Zeitouni.
\newblock Fluctuations of the solutions to the kpz equation in dimensions three and higher.
\newblock {\em Probability Theory and Related Fields}, 176:1217--1258, 2018.

\bibitem[DM88]{Davies1988HeatKB}
E.~B. Davies and Nikolaos Mandouvalos.
\newblock Heat kernel bounds on hyperbolic space and kleinian groups.
\newblock {\em Proceedings of The London Mathematical Society}, pages 182--208, 1988.

\bibitem[FK08]{Foondun2008IntermittenceAN}
Mohammud Foondun and Davar Khoshnevisan.
\newblock Intermittence and nonlinear parabolic stochastic partial differential equations.
\newblock {\em Electronic Journal of Probability}, 14:548--568, 2008.

\bibitem[Fol84]{Folland1984RealAM}
G~Folland.
\newblock {\em Real Analysis: Modern Techniques and Applications}.
\newblock John and Wiley Sons Inc, 1984.

\bibitem[GHL24]{GHL24}
Luca Gerolla, Martin Hairer, and Xue-Mei Li.
\newblock Fluctuations of stochastic pdes with long-range correlations, 2024.

\bibitem[HHNT14]{Hu2014StochasticHE}
Yaozhong Hu, Jingyu Huang, David Nualart, and Samy Tindel.
\newblock Stochastic heat equations with general multiplicative gaussian noises: H{\"o}lder continuity and intermittency.
\newblock {\em Electronic Journal of Probability}, 20:1--50, 2014.

\bibitem[HN07]{Hu2007StochasticHE}
Yaozhong Hu and David Nualart.
\newblock Stochastic heat equation driven by fractional noise and local time.
\newblock {\em Probability Theory and Related Fields}, 143:285--328, 2007.

\bibitem[HS23]{Hairer2023RegularitySO}
Martin Hairer and Harprit Singh.
\newblock Regularity structures on manifolds and vector bundles.
\newblock {\em arXiv preprint arXiv:2308.05049}, 2023.

\bibitem[KK15]{Khoshnevisan2013NonlinearNE}
Davar Khoshnevisan and Kunwoo Kim.
\newblock Non-linear noise excitation and intermittency under high disorder.
\newblock {\em Proceedings of the American Mathematical Society}, 143(9):4073--4083, 2015.

\bibitem[McK70]{McKean}
H.~P. McKean.
\newblock An upper bound to the spectrum of {$\Delta $} on a manifold of negative curvature.
\newblock {\em J. Differential Geometry}, 4:359--366, 1970.

\bibitem[Mol91]{Molchanov1991IdeasIT}
Stanislav Molchanov.
\newblock Ideas in the theory of random media.
\newblock {\em Acta Applicandae Mathematica}, 22:139--282, 1991.

\bibitem[MS23]{Mayorcas2023SingularSO}
Avi Mayorcas and Harprit Singh.
\newblock Singular spdes on homogeneous lie groups.
\newblock {\em arXiv preprint arXiv:2301.05121}, 2023.

\bibitem[Nua95]{Nualart1995TheMC}
David Nualart.
\newblock {\em The Malliavin Calculus and Related Topics}.
\newblock springer, 1995.

\bibitem[Tao22]{Tao2022GaussianFO}
Ran Tao.
\newblock Gaussian fluctuations of a nonlinear stochastic heat equation in dimension two.
\newblock {\em Stochastics and Partial Differential Equations: Analysis and Computations}, 12:220--246, 2022.

\bibitem[TV02]{TINDEL200253}
Samy Tindel and Frederi Viens.
\newblock Almost sure exponential behaviour for a parabolic spde on a manifold.
\newblock {\em Stochastic Processes and their Applications}, 100(1):53--74, 2002.

\bibitem[Wal86]{Walsh1986AnIT}
John~B. Walsh.
\newblock An introduction to stochastic partial differential equations.
\newblock 1986.

\end{thebibliography}

\end{document}